\newtheorem{theorem}{Theorem}
\newtheorem{lemma}[theorem]{Lemma}
\newtheorem{proposition}[theorem]{Proposition}
\newtheorem{corollary}[theorem]{Corollary}
\theoremstyle{definition}
\newtheorem{definition}[theorem]{Definition}
\newtheorem{example}[theorem]{Example}
\newtheorem{notation}[theorem]{Notation}
\newtheorem{algorithm}[theorem]{Algorithm}
\title{Arithmetic varieties of numerical semigroups}
\author{Manuel B. Branco}
\address{Departamento de Matem\'aticas, Universidade de \'Evora, 7000-671 \'Evora, Portugal}
\email{mbb@uevora.pt}
\author{Ignacio Ojeda}
\address{Departamento de Matem\'aticas, Universidad de Extremadura, 06071 Badajoz, Spain}
\email{ojedamc@unex.es}
\author{Jos\'e Carlos Rosales}
\address{Departamento de \'Algebra. Universidad de Granada, 18010 Granada, Spain}
\email{jrosales@ugr.es}
\subjclass[2020]{Primary: 20M14, 20M07 Secondary: 05C05}
\keywords{Numerical semigroup; varieties; rooted tree; Frobenius number; multicipity; depth.}
\begin{document}

\maketitle

\begin{abstract}
In this paper we present the notion of arithmetic variety for numerical semigroups. We study various aspects related to these varieties such as the smallest arithmetic that contains a set of numerical semigroups and we exhibit the root three associated with an arithmetic variety. This tree is not locally finite; however, if the Frobenius number is fixed, the tree has finitely many nodes and algorithms can be developed. All algorithms provided in this article include their (non-debugged) implementation in GAP.
\end{abstract}

\section{Introduction}

Let $\mathbb{Z}$ be the set of integer numbers
and let $\mathbb{N}$ be the set of non-negative integer numbers. A \emph{submonoid} of $(\mathbb{N},+)$ is a subset of $\mathbb{N}$ containing $0$ that is closed under addition. A \emph{numerical semigroup} is a submonoid $S$ of $(\mathbb{N},+)$ such that $\#(\mathbb{N} \setminus S) < \infty$, that is, $\mathbb{N} \setminus S$ has finite cardinality.

If $S$ is a numerical semigroup, then $\operatorname{m}(S) = \min(S \setminus \{0\}), \operatorname{F}(S) = \max(\mathbb{Z} \setminus S)$ and $\operatorname{g}(S) = \#(\mathbb{N} \setminus S)$ are relevant invariants  of $S$ called \emph{multiplicity}, \emph{Frobenuis number} and \emph{genus of} $S$, respectively.

If $A$ is a non-empty subset of $\mathbb{N}$, then we write $\langle A \rangle$ for the submonoid of $(\mathbb{N},+)$ generated by $A$, that is, \[\langle A \rangle = \left\{u_1 a_1 + \cdots + u_n a_n \mid n \in \mathbb{N} \setminus \{0\}, \{a_1, \ldots, a_n\} \subseteq A\ \text{and}\ \{u_1, \ldots, u_n\} \subset \mathbb{N} \right\}.\] In \cite[Lemma 2.1]{libro} it is shown that $\langle A \rangle$ is a numerical semigroup if and only if $\gcd(A) = 1$.

If $M$ is a submonoid of $(\mathbb{N},+)$ and $M = \langle A \rangle$ for some non-empty subset $A$ of $\mathbb{N}$, then we say that $A$ is a system of generators of $M$. Moreover, if $M \neq \langle B \rangle$ for every $B \subsetneq A$, then we say that $A$ is a minimal system of generators of $M$. In \cite[Corollary 2.8]{libro} it is shown that every submonoid of $(\mathbb{N},+)$ has a unique minimal system of generators which, moreover, is finite. We write $\operatorname{msg}(M)$ for the minimal system of generators of $M$. The cardinality of $\operatorname{msg}(M)$ is the \emph{embbeding dimension} of $M$ and is the denoted by $\operatorname{e}(M)$.

The \emph{Frobenius problem} for numerical semigroups (see \cite{alfonsin}) is to find formulas for the Frobenius number and the genus of a numerical semigroup in terms of its minimal system of generators. Nowadays this problem is widely open for numerical semigroups of embedding dimension greater than or equal to three.

Let $S$ and $T$ be numerical semigroups. Following the notation introduced in \cite{ojeda}, we say that $T$ is an \emph{arithmetic extension} of $S$ if there exist positive integers $d_1, \ldots, d_n$ such that \[T = \left\{ x \in \mathbb{N} \mid \{d_1 x, d_2 x, \ldots, d_n x\} \subset S\right\}.\] Notice that, in this case, we have that $S \subseteq T$.

\begin{definition}
An \emph{arithmetic variety} is a non-empty family $\mathscr{A}$ of numerical semigroups such that 
\begin{enumerate}[(a)]
    \item if $\{S,T\} \subseteq \mathscr{A}$, then $S \cap T \in \mathscr{A}$;
    \item if $S \in \mathscr{A}$ and $T$ is an arithmetic extension of $S$, then $T \in \mathscr{A}$.
\end{enumerate}
In this case, we say that $\mathscr{A}$ is a \emph{finite arithmetic variety} when $\mathscr{A}$ has finite cardinality.
\end{definition}

Notice that \[\mathscr{L}:=\{S \subseteq \mathbb{N} \mid S\ \text{is a numerical semigroup} \}\] is an arithmetic variety and $\mathcal{F} \subseteq \mathscr{L}$ for every family $\mathcal{F}$ of numerical semigroups. 

In the second section we prove that the intersection of arithmetic varieties is an arithmetic variety (Proposition \ref{Prop2}). Moreover, we emphasize that the intersection of all arithmetic varieties containing a given a family $\mathcal{F}$ of numerical semigroups is an arithmetic variety, too (Proposition \ref{Prop3}). This arithmetic variety is denoted by $\mathscr{A}(F)$. We prove that $\mathscr{A}(\mathcal{F})$ is finite if and only if $\mathcal{F}$ has finite cardinality (Corollary \ref{Cor10}). Also, in this case, we give an algorithm  (Algorithm \ref{alg11}) to calculate all the elements of $\mathscr{A}(\mathcal{F})$.

In the third section we introduce the notion of $\mathscr{A}-$monoids and the minimal $\mathscr{A}-$sys\-tem of generators of a $\mathscr{A}-$monoid, where $\mathscr{A}$ is an arithmetic variety. Also, given $e \in \mathbb{N} \setminus \{0\}$, we write $\operatorname{ED}(e)$ for the set of numeric semigroups of the embedding dimension $e$. The results of the third section, combined with those of \cite{sistemas}, allow us to determine whether a numeric semigroup belongs to $\mathscr{A}(\operatorname{ED}(2))$. We propose the generalization to $\mathscr{A}(\operatorname{ED}(e))$ to $e \geq 3$ as an open problem.

If $S$ is a numerical semigroup then $\frac{S}2 := \left\{x \in \mathbb{N} \mid 2\, x \in S \right\}$ is a numerical semigroup (see \cite[Proposition 5.1]{libro}); in particular, it is an arithmetic extension. We write $\mathcal{D}_2(S)$ for set of numerical semigroups $T$ such that $S = \frac{T}{2}$. By \cite[Corollary 3]{R-GS2008}, this set is infinite and contains infinitely many symmetric numerical semigroups (see also \cite[Theorem 5]{swanson}). In the fourth section, we show that the elements in an arithmetic variety $\mathscr{A}$ can be arranged in the form of a tree $\mathcal{G}_\mathscr{A}$ with root $\mathbb{N}$ and such that the set of all the children of $S$ in the tree $\mathcal{G}_\mathscr{A}$ is equal to $\mathcal{D}_2(S) \cap \mathscr{A}$ (Theorem \ref{Th22}). Furthermore, we outline the description of $\mathcal{D}_2(S)$  given in \cite{dobles} (Theorem \ref{Th24}), because of its usefulness in the following sections.

If $\mathscr{A}$ is an arithmetic variety and $F$ is a positive integer, we define \[\mathscr{A}_{F} := \{S \in \mathscr{A} \mid \operatorname{F}(S) \leq F\}.\] In the fifth section, we will see that $\mathscr{A}_{F}$ is a finite arithmetic variety (Proposition \ref{Prop30}); moreover, we give an algorithm (Algorithm \ref{Alg37}) to compute $\{T \in \mathcal{D}_2(S) \mid \operatorname{F}(T) \leq F\}$.

The \emph{depth} of a numerical semigroup $S$, denoted $\operatorname{depth}(S)$, is equal to $\left\lceil \frac{\operatorname{F}(S)+1}{\operatorname{m}(S)} \right\rceil$, where $\left\lceil q \right\rceil$ is ceiling function (the smallest integer greater than $q$). The depth of a numerical semigroup was recently introduced in \cite{gapsets} where evidences are given that supports the Bras-Amor\'os conjecture (\cite{bras}). Also, it is proved that numerical semigroups of depth less that or equal than three are Wilf (see \cite{delgado} for further details on Wilf's conjecture). Moreover, following \cite[Corollary 21]{questiones} and the terminology introduced therein, one can see that the complexity of a numerical semigroup is equal to its depth.

If $q \in \mathbb{N}$, then we write $\mathscr{C}_q$ for the set of numerical semigroups with depth less than or equal to $q$ . In the sixth section, we prove that $\mathscr{C}_q$ is an arithmetic variety (Theorem \ref{ThDepth}). Furthermore, taking advantage of the results of the third Section, we formulate an algorithm (Algorith \ref{Alg43}) to compute the subset of $\mathscr{C}_q$ consisting of numerical semigroups with Frobenius number $F$.

\section{The smallest arithmetic variety containing a family of numerical semigroups}

Let $S$ be a numerical semigroup and $d \in \mathbb{N} \setminus \{0\}$. As mentioned in the introduction, we write $\frac{S}{d}$ for the set $\left\{x \in \mathbb{N} \mid d\, x \in S\right\}.$ In \cite[Proposition 5.1]{libro} it is shown that $\frac{S}{d}$ is a numerical semigroup. This semigroup is called the \emph{quotient of $S$ by $d$}.

Notice that $\frac{S}{d} = \mathbb{N}$ if and only if $d \in S$. Also, by definition, $T$ is an arithmetic extension of $S$ if and only if there exists $\{d_1, \ldots, d_n\} \subset \mathbb{N} \setminus \{0\}$ such that $T = \frac{S}{d_1} \cap \cdots \cap \frac{S}{d_n}$. With these remarks, the proof of the following result is straightforward.

\begin{proposition}\label{Prop1}
Let $\mathscr{A}$ be a non-empty family of numerical semigroups. Then $\mathscr{A}$ is a arithmetic variety if and only if the following holds
\begin{enumerate}[(a)]
    \item if $\{S,T\} \subset \mathscr{A}$, then $S \cap T \in \mathscr{A}$;
    \item if $S \in \mathscr{A}$ and $d \in \mathbb{N} \setminus \{0\}$, then $\frac{S}{d} \in \mathscr{A}$.
\end{enumerate}
\end{proposition}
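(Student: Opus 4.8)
The plan is to lean entirely on the reformulation recorded in the paragraph immediately preceding the statement: a numerical semigroup $T$ is an arithmetic extension of $S$ exactly when $T = \frac{S}{d_1} \cap \cdots \cap \frac{S}{d_n}$ for some finite list $d_1,\ldots,d_n \in \mathbb{N}\setminus\{0\}$. Since condition (a) is literally identical in the definition of arithmetic variety and in the proposition, the whole task reduces to showing that condition (b) of the definition is equivalent, in the presence of (a), to condition (b) of the proposition. I would also recall at the outset the two standard facts that make every object in sight a numerical semigroup: $\frac{S}{d}$ is a numerical semigroup by \cite[Proposition 5.1]{libro}, and a finite intersection of numerical semigroups is again one (it contains $0$, is closed under addition, and its complement in $\mathbb{N}$ is contained in the union of the complements, hence finite).

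For the forward implication, suppose $\mathscr{A}$ is an arithmetic variety. Condition (a) holds by definition. For (b), fix $S \in \mathscr{A}$ and $d \in \mathbb{N}\setminus\{0\}$; then $\frac{S}{d}$ is the arithmetic extension of $S$ obtained from the singleton list $d_1 = d$ (i.e.\ $n=1$), so property (b) of the definition yields $\frac{S}{d} \in \mathscr{A}$. For the converse, assume (a) and (b) of the proposition. Then (a) of the definition holds by hypothesis, and for (b) of the definition I would take $S \in \mathscr{A}$ together with an arithmetic extension $T$ of $S$, write $T = \frac{S}{d_1} \cap \cdots \cap \frac{S}{d_n}$, observe that each $\frac{S}{d_i} \in \mathscr{A}$ by (b), and conclude $T \in \mathscr{A}$ by applying (a) repeatedly.

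The one point that needs a line of care — and it is the "obstacle" only in a formal sense — is that axiom (a) closes $\mathscr{A}$ under intersection of \emph{two} members, whereas an arithmetic extension is an intersection of $n$ quotients. This is handled by a trivial induction on $n$: the base case $n=1$ is immediate, and the inductive step writes $\frac{S}{d_1}\cap\cdots\cap\frac{S}{d_{k+1}} = \big(\frac{S}{d_1}\cap\cdots\cap\frac{S}{d_k}\big)\cap\frac{S}{d_{k+1}}$, with the first factor in $\mathscr{A}$ by the induction hypothesis and the second in $\mathscr{A}$ by (b), so that (a) applies. (The degenerate case $n=0$ does not occur, since an arithmetic extension is defined from a nonempty list $\{d_1,\ldots,d_n\}$.) I would finally remark that the hypothesis $\mathscr{A}\neq\emptyset$ is only there so that the notion of arithmetic variety is non-vacuous and is not otherwise used in the equivalence.
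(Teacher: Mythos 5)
Your proposal is correct and follows exactly the route the paper intends: the paper declares the proof ``straightforward'' from the preceding remark that $T$ is an arithmetic extension of $S$ if and only if $T=\frac{S}{d_1}\cap\cdots\cap\frac{S}{d_n}$, and your argument (singleton list for one direction, induction on $n$ using pairwise intersection for the other) is precisely the straightforward verification being alluded to. No gaps.
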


Since all arithmetic variety contains $\{\mathbb{N}\}$, we have that the intersection of arithmetic varieties is a non-empty set of numerical semigroups.

\begin{proposition}\label{Prop2}
The intersection of arithmetic varieties is an arithmetic variety.
\end{proposition}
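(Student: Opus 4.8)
The plan is to take an arbitrary non-empty collection $\{\mathscr{A}_i\}_{i \in I}$ of arithmetic varieties, set $\mathscr{A} := \bigcap_{i \in I} \mathscr{A}_i$, and verify directly that $\mathscr{A}$ satisfies the two defining conditions in the Definition (or, equivalently, the two conditions in Proposition \ref{Prop1}). First I would dispose of the non-emptiness requirement, which is the one point genuinely worth stating: as remarked just before the statement, every arithmetic variety contains $\mathbb{N}$ (for any $S$ in a variety and any $d \in S$ we have $\frac{S}{d} = \mathbb{N}$, so $\mathbb{N}$ lies in the variety), hence $\mathbb{N} \in \mathscr{A}_i$ for every $i \in I$ and therefore $\mathbb{N} \in \mathscr{A}$, so $\mathscr{A}$ is a non-empty family of numerical semigroups.

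Next I would check condition (a): if $\{S,T\} \subseteq \mathscr{A}$, then $\{S,T\} \subseteq \mathscr{A}_i$ for every $i \in I$; since each $\mathscr{A}_i$ is an arithmetic variety, $S \cap T \in \mathscr{A}_i$ for every $i$, and hence $S \cap T \in \mathscr{A}$. For condition (b), if $S \in \mathscr{A}$ and $T$ is an arithmetic extension of $S$, then $S \in \mathscr{A}_i$ for every $i$, so $T \in \mathscr{A}_i$ for every $i$ by condition (b) applied to each $\mathscr{A}_i$, and thus $T \in \mathscr{A}$. (If one prefers, the same argument works verbatim through Proposition \ref{Prop1}, replacing "$T$ is an arithmetic extension of $S$" by "$T = \frac{S}{d}$ for some $d \in \mathbb{N} \setminus \{0\}$".)

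I do not expect any real obstacle: the argument is the standard "intersection of substructures is a substructure" pattern, and the conditions (a) and (b) are of the right logical shape (universally quantified closure properties) to pass to arbitrary intersections. The only subtlety is the one flagged above — the Definition demands a non-empty family, so one must exhibit a common element of all the $\mathscr{A}_i$; the fact that $\mathbb{N}$ is forced to belong to every arithmetic variety is precisely what makes the statement true without any hypothesis on the index set $I$.
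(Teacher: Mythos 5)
Your proof is correct and follows essentially the same route as the paper's: establish $\mathbb{N}\in\bigcap_{i\in I}\mathscr{A}_i$ to get non-emptiness, then verify the two closure conditions componentwise. No further comment is needed.
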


\begin{proof}
Let $\{\mathscr{A}_i\}_{i \in I}$ be an arbitrary family of arithmetic varieties. From the previous observation, it follows that $\mathbb{N} \in \cap_{i \in I} \mathscr{A}_i$. Thus, the set $\cap_{i \in I} \mathscr{A}_i$ is non-empty; let us see that it is an arithmetic variety. On the one hand, if $\{S,T\} \subseteq \cap_{i \in I} \mathscr{A}_i$, then $\{S,T\} \subseteq \mathscr{A}_i$ for every $i \in I$, therefore $S \cap T \in \mathscr{A}_i$ for every $i \in I$ and, consequently, $S \cap T \in \cap_{i \in I} \mathscr{A}_i$. On the other hand, if $S \in \cap_{i \in I} \mathscr{A}_i$ and $d \in \mathbb{N} \setminus \{0\}$, then, by Proposition \ref{Prop1}, we have that $\frac{S}{d} \in \cap_{i \in I} \mathscr{A}_i$ for every $i \in I$; hence $\frac{S}{d} \in \cap_{i \in I} \mathscr{A}_i$. So, applying  Proposition \ref{Prop1} again, we are done.
\end{proof}

Recall that, if $\mathcal{F}$ is a family of numerical semigroups, then we write $\mathscr{A}(\mathcal{F})$ to denote the intersection of all arithmetic varieties containing $\mathcal{F}$. Therefore, by Proposition \ref{Prop2}, we have the folllowing.

\begin{proposition}\label{Prop3}
If $\mathcal{F}$ is a family of numerical semigroups, then $\mathscr{A}(\mathcal{F})$ is the smallest arithmetic variety containing $\mathcal{F}$.
\end{proposition}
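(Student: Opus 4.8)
The plan is to verify directly that $\mathscr{A}(\mathcal{F})$ satisfies the definition of arithmetic variety and that it sits inside every arithmetic variety containing $\mathcal{F}$; the latter is immediate from the definition as an intersection, so the real content is that the intersection is itself an arithmetic variety containing $\mathcal{F}$, which is exactly what Proposition \ref{Prop2} delivers.

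First I would note that the family of all arithmetic varieties containing $\mathcal{F}$ is non-empty, since $\mathscr{L}$ (the family of all numerical semigroups) is an arithmetic variety and $\mathcal{F} \subseteq \mathscr{L}$. Hence $\mathscr{A}(\mathcal{F})$ is a well-defined intersection of a non-empty collection of arithmetic varieties, and by Proposition \ref{Prop2} it is an arithmetic variety. Moreover, since every member of the collection contains $\mathcal{F}$, so does their intersection; thus $\mathcal{F} \subseteq \mathscr{A}(\mathcal{F})$, and $\mathscr{A}(\mathcal{F})$ is an arithmetic variety containing $\mathcal{F}$.

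Next I would establish minimality. Let $\mathscr{B}$ be any arithmetic variety with $\mathcal{F} \subseteq \mathscr{B}$. Then $\mathscr{B}$ is one of the arithmetic varieties over which the intersection defining $\mathscr{A}(\mathcal{F})$ is taken, so $\mathscr{A}(\mathcal{F}) \subseteq \mathscr{B}$. This shows $\mathscr{A}(\mathcal{F})$ is contained in every arithmetic variety containing $\mathcal{F}$, i.e. it is the smallest such one.

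There is essentially no obstacle here: the argument is the standard closure-operator pattern, and all the work has been front-loaded into Proposition \ref{Prop2}. The only point requiring a moment's care is confirming that the indexing family is non-empty so that the intersection is meaningful, which is handled by exhibiting $\mathscr{L}$ as a witness.
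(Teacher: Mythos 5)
Your proposal is correct and follows exactly the route the paper takes: the paper states Proposition \ref{Prop3} as an immediate consequence of Proposition \ref{Prop2} applied to the (non-empty, since $\mathscr{L}$ contains $\mathcal{F}$) collection of arithmetic varieties containing $\mathcal{F}$. You have simply written out the standard closure-operator details that the paper leaves implicit.
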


The following result is technical and its proof is carried out by direct verification.

\begin{lemma}\label{Lema4}
If $S, T$ are numerical semigroups and $a,b$ are positive integers, then 
\[
\frac{\frac{S}{a}}{b} = \frac{S}{ab}\quad \mbox{and}\quad \frac{S \cap T}{a} = \frac{S}{a} \cap \frac{T}{a}.
\]
\end{lemma}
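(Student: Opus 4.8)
The plan is to prove the two identities separately by double inclusion, unwinding the definition $\frac{S}{d} = \{x \in \mathbb{N} \mid d\,x \in S\}$ in each case; both verifications reduce to elementary facts about divisibility and intersection, so the ``main obstacle'' is really just bookkeeping rather than any conceptual difficulty.

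\medskip

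For the first identity, I would argue as follows. Fix $x \in \mathbb{N}$. By definition, $x \in \frac{S/a}{b}$ if and only if $b\,x \in \frac{S}{a}$, and this in turn holds if and only if $a(b\,x) \in S$. Since $a(b\,x) = (ab)x$, this is precisely the condition $x \in \frac{S}{ab}$. Hence the two sets have exactly the same elements, and $\frac{S/a}{b} = \frac{S}{ab}$. The associativity and commutativity of multiplication in $\mathbb{N}$ is all that is needed here.

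\medskip

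For the second identity, again fix $x \in \mathbb{N}$. We have $x \in \frac{S \cap T}{a}$ if and only if $a\,x \in S \cap T$, which happens if and only if $a\,x \in S$ \emph{and} $a\,x \in T$. The first conjunct says $x \in \frac{S}{a}$ and the second says $x \in \frac{T}{a}$, so the whole condition is equivalent to $x \in \frac{S}{a} \cap \frac{T}{a}$. Therefore $\frac{S \cap T}{a} = \frac{S}{a} \cap \frac{T}{a}$. (One should note in passing that $S \cap T$ is indeed a numerical semigroup — it contains $0$, is closed under addition, and its complement in $\mathbb{N}$ is contained in the union of the two finite complements — so that the notation $\frac{S\cap T}{a}$ makes sense; this is already implicit in axiom (a) of an arithmetic variety applied to $\mathscr{L}$.)

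\medskip

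In short, there is no hard part: each claim follows by chasing an arbitrary element through the defining membership conditions, with the only substantive observations being $(ab)x = a(bx)$ and the distributivity of the membership predicate over ``and''. I would present the proof compactly as a pair of ``if and only if'' chains as above, perhaps even in a single line each, since the paper itself flags the lemma as ``technical'' and provable ``by direct verification.''
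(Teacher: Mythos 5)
Your proof is correct and is exactly the ``direct verification'' the paper alludes to without spelling out: both identities follow from unwinding the membership condition $x \in \frac{S}{d} \Leftrightarrow dx \in S$. Nothing to add.
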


\begin{lemma}\label{Lema5}
If $S$ is a numerical semigroup, then 
\[
\mathscr{A} = \left\{ \bigcap_{i=1}^n \frac{S}{d_i} \mid n \in \mathbb{N}\setminus \{0\}\ \text{and}\ \{d_1, \ldots, d_n\} \subset \mathbb{N} \setminus S \right\} \cup \{\mathbb{N}\}
\]
is an arithmetic variety containing $\{S\}$. 
\end{lemma}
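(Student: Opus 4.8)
The plan is to verify directly that the family
\[
\mathscr{A} = \left\{ \bigcap_{i=1}^n \frac{S}{d_i} \;\middle|\; n \in \mathbb{N}\setminus \{0\},\ \{d_1, \ldots, d_n\} \subset \mathbb{N} \setminus S \right\} \cup \{\mathbb{N}\}
\]
satisfies the two conditions of Proposition~\ref{Prop1}, namely closure under finite intersections and closure under quotients by arbitrary positive integers. That $\{S\} \subseteq \mathscr{A}$ is immediate: $S \notin S$ is false, so we cannot take $d_1 = $ something in $S$, but note $S = \frac{S}{1}$ only when $1 \notin S$, which fails; instead observe that for any fixed $d_0 \in \mathbb{N}\setminus S$ (which is non-empty since $\operatorname{F}(S) \geq 1$ for $S \neq \mathbb{N}$, and if $S = \mathbb{N}$ then $S = \mathbb{N} \in \mathscr{A}$ trivially) we do \emph{not} directly get $S$; rather the point is that $S = \bigcap_{i=1}^n \frac{S}{d_i}$ is \emph{not} how $S$ arises. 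Let me reconsider: actually the correct observation is that $\frac{S}{d} = \mathbb{N}$ when $d \in S$, so restricting to $d_i \in \mathbb{N}\setminus S$ loses nothing in terms of the quotients that are proper, and $S$ itself should be recovered — but in fact one checks that $S$ need not be a quotient of itself. So the membership $S \in \mathscr{A}$ requires care; the natural route is to show $S = \bigcap_{d \in \mathbb{N}\setminus S} \frac{S}{?}$ is wrong, and instead one proves $S \in \mathscr{A}$ by noting that $S$ equals a finite intersection of the $\frac{S}{d_i}$ precisely when we choose the $d_i$ to "cut out" every gap; I would verify that $x \in \bigcap_{d \in \mathbb{N}\setminus S}\frac{S}{d}$ forces $dx \in S$ for all gaps $d$, and combined with $1\cdot$ issues, conclude $x \in S$. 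Since $\mathbb{N}\setminus S$ is finite, this is a finite intersection, giving $S \in \mathscr{A}$.

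For condition (a), closure under intersection: given two elements of $\mathscr{A}$, if either is $\mathbb{N}$ the intersection is the other, which lies in $\mathscr{A}$. Otherwise they are $\bigcap_{i=1}^n \frac{S}{d_i}$ and $\bigcap_{j=1}^m \frac{S}{e_j}$ with all $d_i, e_j \in \mathbb{N}\setminus S$; their intersection is $\bigcap \frac{S}{d_i} \cap \bigcap \frac{S}{e_j}$, a finite intersection of quotients $\frac{S}{c}$ with $c$ ranging over $\{d_1,\dots,d_n,e_1,\dots,e_m\} \subset \mathbb{N}\setminus S$, hence an element of $\mathscr{A}$. This step is purely formal.

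For condition (b), closure under quotient by $d \in \mathbb{N}\setminus\{0\}$: take an element $M \in \mathscr{A}$. If $M = \mathbb{N}$, then $\frac{\mathbb{N}}{d} = \mathbb{N} \in \mathscr{A}$. Otherwise $M = \bigcap_{i=1}^n \frac{S}{d_i}$ with $d_i \in \mathbb{N}\setminus S$. Using the second identity of Lemma~\ref{Lema4}, $\frac{M}{d} = \frac{1}{d}\bigcap_{i=1}^n \frac{S}{d_i} = \bigcap_{i=1}^n \frac{\frac{S}{d_i}}{d}$, and then the first identity of Lemma~\ref{Lema4} gives $\frac{\frac{S}{d_i}}{d} = \frac{S}{d\, d_i}$. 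So $\frac{M}{d} = \bigcap_{i=1}^n \frac{S}{d\, d_i}$. Now for each $i$, either $d\, d_i \in S$, in which case $\frac{S}{d\,d_i} = \mathbb{N}$ and that factor can be dropped from the intersection (it does not change it), or $d\, d_i \in \mathbb{N}\setminus S$, in which case it is an allowed index. If \emph{all} the factors get dropped, then $\frac{M}{d} = \mathbb{N} \in \mathscr{A}$; otherwise $\frac{M}{d}$ is a non-empty finite intersection of terms $\frac{S}{c}$ with $c \in \mathbb{N}\setminus S$, hence lies in $\mathscr{A}$. Either way $\frac{M}{d} \in \mathscr{A}$, so Proposition~\ref{Prop1} applies and $\mathscr{A}$ is an arithmetic variety.

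The only genuinely delicate point is the claim $S \in \mathscr{A}$, i.e.\ writing $S$ itself as a finite intersection $\bigcap_{i=1}^n \frac{S}{d_i}$ with all $d_i$ gaps of $S$ — one must check both that such a representation exists (it should not, in general, unless we are careful) and reconcile it with the definition. I expect the intended argument is simply that $\mathscr{A}$ is defined to be the stated set and $\{S\}\subseteq\mathscr{A}$ is read off from the fact that taking $n$ large enough with $\{d_1,\dots,d_n\}$ all gaps recovers $S$; I would double-check this by showing $x \in \frac{S}{d}$ for every gap $d$ of $S$ implies $x \in S$ (equivalently, $x \notin S$ implies $dx \notin S$ for some gap $d$; e.g.\ take $d$ with $d \equiv$ a suitable residue, or note $x$ itself is a gap and $1\cdot x \notin S$, but $1 \in S$ so this doesn't work — one needs $d$ a gap with $dx$ still a gap, which holds for instance when $x$ is a gap and $d = \operatorname{F}(S) - x + 1$ or by a direct pigeonhole on residues modulo $x$). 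Handling this membership cleanly is the crux; everything else is bookkeeping with Lemma~\ref{Lema4}.
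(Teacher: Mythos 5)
Your verification of conditions (a) and (b) of Proposition~\ref{Prop1} is correct and essentially the paper's argument: closure under intersection is formal, and for quotients you correctly combine the two identities of Lemma~\ref{Lema4} to get $\frac{M}{d}=\bigcap_i \frac{S}{d\,d_i}$ and then discard the factors with $d\,d_i\in S$ (which equal $\mathbb{N}$). That part is fine.

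The genuine problem is the membership $S\in\mathscr{A}$, which you correctly identify as the remaining point but then fail to resolve, because you start from a false premise. You assert that ``$S=\frac{S}{1}$ only when $1\notin S$, which fails'' and later ``but $1\in S$ so this doesn't work.'' For a numerical semigroup, $1\in S$ if and only if $S=\mathbb{N}$ (if $1\in S$ then closure under addition gives all of $\mathbb{N}$). So for $S\neq\mathbb{N}$ one has $1\in\mathbb{N}\setminus S$, and taking $n=1$, $d_1=1$ in the defining expression gives $\frac{S}{1}=S\in\mathscr{A}$ immediately; if $S=\mathbb{N}$ then $S\in\mathscr{A}$ because $\mathbb{N}$ is adjoined explicitly. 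This one-line observation is exactly how the paper concludes $\{S\}\subseteq\mathscr{A}$. Your alternative attempt via $\bigcap_{d\in\mathbb{N}\setminus S}\frac{S}{d}=S$ would also work for the same reason (the factor with $d=1$ already equals $S$), but as written you leave the claim hanging with speculative fixes, so the proof is incomplete as submitted.
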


\begin{proof}
Clearly, the intersection of any two elements in $\mathscr{A}$ belongs to $\mathscr{A}$ and, by Lemma \ref{Lema4}, we have that $\frac{S}{d} \in \mathscr{A}$ for every $S \in \mathscr{A}$ and $d \in \mathbb{N} \setminus \{0\}$. Thus, by Proposition \ref{Prop1}, $\mathscr{A}$ is an arithmetic variety. Moreover, since $S = \frac{S}{1}$, we conclude that $\{S\} \subseteq \mathscr{A}$.
\end{proof}

\begin{proposition}\label{Prop6}
If $S$ is a numerical semigroup, then 
\begin{equation}\label{ecu1}
\mathscr{A}(\{S\}) = \left\{ \bigcap_{i=1}^n \frac{S}{d_i} \mid n \in \mathbb{N}\setminus \{0\}\ \text{and}\ \{d_1, \ldots, d_n\} \subset \mathbb{N} \setminus S \right\} \cup \{\mathbb{N}\}.
\end{equation}
\end{proposition}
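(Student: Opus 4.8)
The plan is to establish the equality in (\ref{ecu1}) by proving the two inclusions, writing $\mathscr{A}$ for the family appearing on the right-hand side of (\ref{ecu1}) --- which is precisely the family constructed in Lemma \ref{Lema5}.

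For the inclusion $\mathscr{A}(\{S\}) \subseteq \mathscr{A}$ there is essentially nothing to do: Lemma \ref{Lema5} already tells us that $\mathscr{A}$ is an arithmetic variety and that $\{S\} \subseteq \mathscr{A}$, so Proposition \ref{Prop3} --- which identifies $\mathscr{A}(\{S\})$ as the smallest arithmetic variety containing $\{S\}$ --- gives $\mathscr{A}(\{S\}) \subseteq \mathscr{A}$ at once.

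For the reverse inclusion $\mathscr{A} \subseteq \mathscr{A}(\{S\})$ I would argue element by element. The element $\mathbb{N}$ lies in $\mathscr{A}(\{S\})$ because every arithmetic variety contains $\mathbb{N}$ (this was observed just before Proposition \ref{Prop2}; concretely, $\mathbb{N} = \frac{S}{\operatorname{m}(S)}$ is an arithmetic extension of $S$). Any other element of $\mathscr{A}$ has the form $\bigcap_{i=1}^n \frac{S}{d_i}$ with $\{d_1,\ldots,d_n\} \subset \mathbb{N}\setminus S$. Since $S \in \mathscr{A}(\{S\})$ and $\mathscr{A}(\{S\})$ is an arithmetic variety, part (b) of Proposition \ref{Prop1} yields $\frac{S}{d_i} \in \mathscr{A}(\{S\})$ for each $i$, and then $n-1$ successive applications of part (a) of Proposition \ref{Prop1} give $\bigcap_{i=1}^n \frac{S}{d_i} \in \mathscr{A}(\{S\})$. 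Hence $\mathscr{A} \subseteq \mathscr{A}(\{S\})$, and the two inclusions together prove (\ref{ecu1}).

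I do not anticipate a genuine obstacle; the only point deserving a word of care is why it is harmless to restrict the indices $d_i$ to $\mathbb{N}\setminus S$ rather than to all of $\mathbb{N}\setminus\{0\}$. This is because $\frac{S}{d} = \mathbb{N}$ whenever $d \in S$, so in a finite intersection $\bigcap_{i=1}^n \frac{S}{d_i}$ over arbitrary positive integers the factors with $d_i \in S$ are either all of them --- giving $\mathbb{N}$ --- or can be discarded without changing the intersection; thus the seemingly larger family obtained by allowing $d_i \in S$ coincides with $\mathscr{A}$. With this remark in hand, the proposition is just the combination of Lemma \ref{Lema5} with the minimality property of Proposition \ref{Prop3}.
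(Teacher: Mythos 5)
Your proof is correct and follows essentially the same route as the paper: one inclusion from Lemma \ref{Lema5} together with the minimality in Proposition \ref{Prop3}, and the other from the closure properties of Proposition \ref{Prop1} applied to $S \in \mathscr{A}(\{S\})$. The extra remark about why restricting the $d_i$ to $\mathbb{N}\setminus S$ loses nothing is a sensible clarification but not needed for the argument as structured.
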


\begin{proof}
Let $\mathscr{A}$ be the right hand side of \eqref{ecu1} and let $\mathscr{A}'$ be an arithmetic variety containing $\{S\}.$ From Proposition \ref{Prop1} it follows that $\mathscr{A} \subseteq \mathscr{A}'$. Therefore, by Lemma \ref{Lema5}, we have that $\mathscr{A}$ is the smallest arithmetic variety containing $\{S\}$. Now, by Proposition \ref{Prop3}, we conclude that $\mathscr{A} = \mathscr{A}(\{S\})$.
\end{proof}

The following result is an immediate consequence of Propositions \ref{Prop1} and \ref{Prop6} (see also \cite[Proposition 1]{ojeda}).

\begin{corollary}\label{Lema15}
If $S$ is a numerical semigroup, then \[\mathscr{A}(\{S\}) = \{T \in \mathscr{L} \mid T\ \text{is an arithmetic extension of}\ S\}.\]
\end{corollary}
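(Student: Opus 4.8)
The plan is to show set equality between $\mathscr{A}(\{S\})$ and the collection of all arithmetic extensions of $S$ by two inclusions, leaning on the explicit description of $\mathscr{A}(\{S\})$ furnished by Proposition \ref{Prop6}. First I would recall from the introduction that $T$ is an arithmetic extension of $S$ precisely when there are positive integers $d_1,\dots,d_n$ with $T = \frac{S}{d_1} \cap \cdots \cap \frac{S}{d_n}$, i.e. when $T$ lies in the family
\[
\mathscr{B} := \left\{ \bigcap_{i=1}^n \frac{S}{d_i} \mid n \in \mathbb{N}\setminus \{0\}\ \text{and}\ \{d_1, \ldots, d_n\} \subset \mathbb{N} \setminus \{0\} \right\}.
\]
So the statement reduces to proving $\mathscr{A}(\{S\}) = \mathscr{B}$, and by Proposition \ref{Prop6} we already know $\mathscr{A}(\{S\})$ equals the analogous family but with the $d_i$ restricted to $\mathbb{N}\setminus S$, together with $\{\mathbb{N}\}$ thrown in.

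For the inclusion $\mathscr{A}(\{S\}) \subseteq \mathscr{B}$: every intersection $\bigcap_{i=1}^n \frac{S}{d_i}$ with $d_i \in \mathbb{N}\setminus S$ is visibly of the required form, so those elements lie in $\mathscr{B}$; and $\mathbb{N} = \frac{S}{d}$ for any $d \in S$ (using that $d \in S$ iff $\frac{S}{d} = \mathbb{N}$, noted just after Proposition \ref{Prop1}), and since $S$ is a numerical semigroup it contains some positive integer, so $\mathbb{N} \in \mathscr{B}$ as well. Hence $\mathscr{A}(\{S\}) \subseteq \mathscr{B}$.

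For the reverse inclusion $\mathscr{B} \subseteq \mathscr{A}(\{S\})$: take any $T = \bigcap_{i=1}^n \frac{S}{d_i}$ with $d_i$ positive integers, not necessarily outside $S$. Split the index set into those $i$ with $d_i \notin S$ and those with $d_i \in S$. For the latter, $\frac{S}{d_i} = \mathbb{N}$, which does not affect the intersection. If every $d_i$ lies in $S$, then $T = \mathbb{N} \in \mathscr{A}(\{S\})$ by the explicit description. Otherwise, discarding the redundant terms, $T = \bigcap_{i \in J} \frac{S}{d_i}$ where $J = \{i : d_i \notin S\}$ is non-empty and each $d_i$ with $i \in J$ lies in $\mathbb{N}\setminus S$, which is exactly the form appearing on the right-hand side of \eqref{ecu1}. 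Therefore $T \in \mathscr{A}(\{S\})$, and combining both inclusions gives the claim.

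The only mildly delicate point — and it is barely an obstacle — is the bookkeeping around the degenerate factors $\frac{S}{d_i} = \mathbb{N}$ when $d_i \in S$, i.e. making sure that the "$\cup\,\{\mathbb{N}\}$" in Proposition \ref{Prop6} together with the convention that an empty-looking intersection is handled correctly accounts for every arithmetic extension whose defining integers happen to lie inside $S$. Once that is dispatched by the case split above, the proof is a direct verification, which is why the statement is billed as an immediate consequence of Propositions \ref{Prop1} and \ref{Prop6}.
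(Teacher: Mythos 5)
Your proof is correct and follows exactly the route the paper intends: the paper gives no explicit argument, declaring the corollary an immediate consequence of Propositions \ref{Prop1} and \ref{Prop6}, and your two inclusions (using the description in \eqref{ecu1} together with the observation that $\frac{S}{d}=\mathbb{N}$ precisely when $d\in S$) are just that consequence written out. The bookkeeping of the degenerate factors $\frac{S}{d_i}=\mathbb{N}$ is handled correctly, so there is nothing to add.
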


\begin{proposition}\label{Prop8}
If $S$ is a numerical semigroup, then $\mathscr{A}(\{S\})$ is a finite arithmetic variety.
\end{proposition}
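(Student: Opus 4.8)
The plan is to read the statement off the explicit description of $\mathscr{A}(\{S\})$ provided by Proposition \ref{Prop6}, and then reduce everything to a counting argument. By that proposition,
\[
\mathscr{A}(\{S\}) = \left\{ \bigcap_{i=1}^n \frac{S}{d_i} \mid n \in \mathbb{N}\setminus \{0\}\ \text{and}\ \{d_1, \ldots, d_n\} \subset \mathbb{N} \setminus S \right\} \cup \{\mathbb{N}\},
\]
so it suffices to bound the number of numerical semigroups expressible as a finite intersection $\bigcap_{i=1}^n \frac{S}{d_i}$ with every $d_i$ in $\mathbb{N}\setminus S$.

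First I would observe that the value of such an intersection depends only on the underlying set $\{d_1,\dots,d_n\}$, since intersection is idempotent and commutative; hence every element of $\mathscr{A}(\{S\})$ other than $\mathbb{N}$ has the form $\bigcap_{d\in D}\frac{S}{d}$ for some non-empty $D\subseteq \mathbb{N}\setminus S$ (and, because $1\notin\frac{S}{d}$ whenever $d\notin S$, every such intersection is a proper subset of $\mathbb{N}$, so $\mathbb{N}$ itself is genuinely excluded here). Now $S$ being a numerical semigroup means precisely that $\mathbb{N}\setminus S$ is finite, of cardinality $\operatorname{g}(S)$, so it has only $2^{\operatorname{g}(S)}$ subsets. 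Therefore the assignment $D\mapsto\bigcap_{d\in D}\frac{S}{d}$ is a surjection from the finite set of non-empty subsets of $\mathbb{N}\setminus S$ onto $\mathscr{A}(\{S\})\setminus\{\mathbb{N}\}$, which gives
\[
\#\mathscr{A}(\{S\})\ \le\ 2^{\operatorname{g}(S)},
\]
and in particular $\mathscr{A}(\{S\})$ is finite; that it is an arithmetic variety is already part of Proposition \ref{Prop6}.

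I do not expect a real obstacle: the substantive content sits in Proposition \ref{Prop6}, and what remains is routine. The only point that deserves a line of care is the reduction from listed tuples $(d_1,\dots,d_n)$ to subsets $D\subseteq\mathbb{N}\setminus S$, namely noting that repetitions among the $d_i$ and divisors lying in $S$ (for which $\frac{S}{d}=\mathbb{N}$) add nothing, after which finiteness of $\mathbb{N}\setminus S$ closes the argument. If one wanted a sharper count one could try to determine exactly which subsets yield distinct quotients, but for the present proposition the crude bound above is enough.
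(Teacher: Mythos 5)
Your proof is correct and follows essentially the same route as the paper: both rest on Proposition \ref{Prop6} together with the finiteness of $\mathbb{N}\setminus S$. The only cosmetic difference is that you count the parameterizing non-empty subsets $D\subseteq\mathbb{N}\setminus S$, whereas the paper notes that every element of $\mathscr{A}(\{S\})$ contains $S$ and counts the (finitely many) numerical semigroups containing $S$; both give the bound $2^{\operatorname{g}(S)}$.
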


\begin{proof}
By Proposition \ref{Prop3}, we have that $\mathscr{A}(\{S\})$ is an arithmetic variety and, by Propositon \ref{Prop6}, we have that $\mathscr{A}(\{S\}) \subseteq \mathcal{F} := \{T \in \mathscr{L} \mid S \subseteq T\}$. Now, since $\mathbb{N} \setminus S$ has finite cardinality, because $S$ is a numerical semigroup, we conclude that $\mathcal{F}$ is a finite set and our claim follows.
\end{proof}

Notice that, by Proposition \ref{Prop6}, we can use Algorithm 23 in \cite{ojeda} to compute $\mathscr{A}(\{S\})$ from $S$.

\begin{theorem}\label{th9}
If $\mathcal{F}$ is a non-empty family of numerical semigroups, then 
\begin{equation}\label{ecu2}
\mathscr{A}(\mathcal{F}) = \left\{ \bigcap_{i=1}^n T_i \mid n \in \mathbb{N} \setminus \{0\}\ \text{and}\ T_i \in \mathscr{A}(\{S_i\})\ \text{for some}\ S_i \in \mathcal{F}, i = 1, \ldots, n \right\}.
\end{equation}
\end{theorem}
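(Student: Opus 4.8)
The plan is to write $\mathscr{B}$ for the family on the right-hand side of \eqref{ecu2} and to prove the two inclusions $\mathscr{A}(\mathcal{F}) \subseteq \mathscr{B}$ and $\mathscr{B} \subseteq \mathscr{A}(\mathcal{F})$ separately, using Proposition \ref{Prop3} to identify $\mathscr{A}(\mathcal{F})$ as the smallest arithmetic variety containing $\mathcal{F}$.

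First I would check that $\mathscr{B}$ is an arithmetic variety containing $\mathcal{F}$. It contains $\mathcal{F}$ (and is non-empty) because for each $S \in \mathcal{F}$ we have $S = \frac{S}{1} \in \mathscr{A}(\{S\})$, so taking $n=1$ and $T_1 = S$ shows $S \in \mathscr{B}$. For condition (a) of Proposition \ref{Prop1}: if $W = \bigcap_{i=1}^n T_i$ and $W' = \bigcap_{j=1}^m T'_j$ lie in $\mathscr{B}$, then $W \cap W'$ is again a finite intersection of numerical semigroups, each belonging to some $\mathscr{A}(\{S\})$ with $S \in \mathcal{F}$, so $W \cap W' \in \mathscr{B}$. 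For condition (b): take $W = \bigcap_{i=1}^n T_i \in \mathscr{B}$ and $d \in \mathbb{N} \setminus \{0\}$; by the second identity of Lemma \ref{Lema4} we get $\frac{W}{d} = \bigcap_{i=1}^n \frac{T_i}{d}$. Since each $T_i$ lies in $\mathscr{A}(\{S_i\})$, which by Proposition \ref{Prop6} (equivalently, by Proposition \ref{Prop3} applied to the singleton $\{S_i\}$) is itself an arithmetic variety, Proposition \ref{Prop1}(b) gives $\frac{T_i}{d} \in \mathscr{A}(\{S_i\})$ for every $i$, whence $\frac{W}{d} \in \mathscr{B}$. Thus $\mathscr{B}$ is an arithmetic variety containing $\mathcal{F}$, and since $\mathscr{A}(\mathcal{F})$ is the smallest such (Proposition \ref{Prop3}), we conclude $\mathscr{A}(\mathcal{F}) \subseteq \mathscr{B}$.

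For the reverse inclusion, let $W = \bigcap_{i=1}^n T_i$ be an arbitrary element of $\mathscr{B}$, with $T_i \in \mathscr{A}(\{S_i\})$ and $S_i \in \mathcal{F}$. Because $\mathcal{F} \subseteq \mathscr{A}(\mathcal{F})$, each $\{S_i\}$ is contained in the arithmetic variety $\mathscr{A}(\mathcal{F})$; as $\mathscr{A}(\{S_i\})$ is the smallest arithmetic variety containing $\{S_i\}$ (Propositions \ref{Prop3} and \ref{Prop6}), it follows that $\mathscr{A}(\{S_i\}) \subseteq \mathscr{A}(\mathcal{F})$, so $T_i \in \mathscr{A}(\mathcal{F})$ for every $i$. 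Since $\mathscr{A}(\mathcal{F})$ is closed under finite intersections (iterating condition (a) of the definition of an arithmetic variety), we get $W = \bigcap_{i=1}^n T_i \in \mathscr{A}(\mathcal{F})$. Hence $\mathscr{B} \subseteq \mathscr{A}(\mathcal{F})$, and combining the two inclusions yields \eqref{ecu2}.

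I do not anticipate a real obstacle; the only point needing a little care is the closure of $\mathscr{B}$ under quotients, which relies first on distributing $\frac{\,\cdot\,}{d}$ over the intersection by Lemma \ref{Lema4}, and then on the fact that each $\mathscr{A}(\{S_i\})$ is an arithmetic variety in its own right. It is also worth recalling explicitly that the "finite intersection" used throughout is obtained by iterating the binary closure property, which is exactly what allows arbitrarily many factors $T_i$ to be handled at once.
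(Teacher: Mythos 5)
Your proof is correct and follows essentially the same route as the paper's: both show that the right-hand side is an arithmetic variety containing $\mathcal{F}$ (using Lemma \ref{Lema4} to distribute quotients over the intersection) and that it is contained in any arithmetic variety containing $\mathcal{F}$, then invoke Proposition \ref{Prop3}. You simply spell out the details that the paper dismisses with ``clearly'' and ``it is easy to check,'' which is fine.
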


\begin{proof}
Let $\mathscr{A}$ be the right hand side of \eqref{ecu2}. Clearly, we have that $\mathcal{F} \subseteq \mathscr{A} \subseteq \mathscr{A}'$, for every 
is arithmetic variety  $\mathscr{A}'$ containing $\mathcal{F}$. Thus, by Proposition \ref{Prop3}, to see that $\mathscr{A}(\mathcal{F}) = \mathscr{A}$ it suffices to prove that $\mathscr{A}$ is an arithmetic variety. Of course, if $\{S,T\} \subseteq \mathscr{A}$, then $S \cap T \in \mathscr{A}$ and, by Lemma \ref{Lema4}, it is easy to check that $\frac{S}{d} \in \mathscr{A}$, for every 
$S \in \mathscr{A}$ and $d \in \mathbb{N} \setminus \{0\}$. Therefore, by Proposition \ref{Prop1}, we conclude that $\mathscr{A}$ is an arithmetic variety. 
\end{proof}

\begin{corollary}\label{Cor10}
Let $\mathcal{F}$ be a family of numerical semigroups. Then $\mathscr{A}(\mathcal{F})$ es a finite arithmetic variety if and only if $\mathcal{F}$ has finite cardinality.
\end{corollary}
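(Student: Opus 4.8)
The plan is to prove the two implications separately; given Theorem \ref{th9} and Proposition \ref{Prop8}, neither requires much work.

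For the ``only if'' direction I would simply note that, by Proposition \ref{Prop3}, $\mathscr{A}(\mathcal{F})$ is an arithmetic variety containing $\mathcal{F}$, so $\mathcal{F} \subseteq \mathscr{A}(\mathcal{F})$. Hence, if $\mathscr{A}(\mathcal{F})$ has finite cardinality, then so does $\mathcal{F}$. This is immediate and needs no further argument.

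For the ``if'' direction, assume $\mathcal{F} = \{S_1, \ldots, S_k\}$ is finite. By Proposition \ref{Prop8}, each $\mathscr{A}(\{S_j\})$ is a finite arithmetic variety, so the set $\mathscr{B} := \bigcup_{j=1}^{k} \mathscr{A}(\{S_j\})$ is finite. By Theorem \ref{th9}, every element of $\mathscr{A}(\mathcal{F})$ is of the form $\bigcap_{i=1}^{n} T_i$ with $n \in \mathbb{N} \setminus \{0\}$ and each $T_i \in \mathscr{B}$. Since intersection of numerical semigroups is a commutative and idempotent operation, such an element is determined by the \emph{subset} $\{T_1, \ldots, T_n\} \subseteq \mathscr{B}$; as $\mathscr{B}$ is finite, it has only $2^{\#\mathscr{B}}$ subsets, and therefore $\mathscr{A}(\mathcal{F})$ has at most $2^{\#\mathscr{B}}$ elements. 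Thus $\mathscr{A}(\mathcal{F})$ is a finite arithmetic variety.

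The only point requiring a little care — hardly an obstacle — is the passage from ``finite intersections of members of $\mathscr{B}$'' to ``intersections indexed by subsets of the finite set $\mathscr{B}$''; this rests precisely on the fact that $\cap$ is commutative and idempotent on $\mathscr{L}$, so repetitions and order among the $T_i$ are irrelevant. Everything else is a direct appeal to the results already established.
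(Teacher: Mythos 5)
Your proof is correct and follows exactly the route the paper intends: the corollary is stated without proof as an immediate consequence of Theorem \ref{th9} and Proposition \ref{Prop8}, and your argument (finiteness of $\mathcal{F}$ from $\mathcal{F}\subseteq\mathscr{A}(\mathcal{F})$, and finiteness of $\mathscr{A}(\mathcal{F})$ by counting intersections of elements of the finite set $\bigcup_j\mathscr{A}(\{S_j\})$) is the natural way to fill in the details.
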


Now, by combining \cite[Algorithm 23]{ojeda} and Theorem \ref{th9}, we obtain an algorithm to compute $\mathscr{A}(\mathcal{F})$, provided that $\mathcal{F}$ is a finite family of numerical semigroups.

\begin{algorithm}\label{alg11} Computation of $\mathscr{A}(\mathcal{F})$.

\textsc{Input:} A finite set $\mathcal{F} = \{S_1, \ldots, S_n\}$ of numerical semigroups. 

\textsc{Output:} $\mathscr{A}(\mathcal{F})$.

\begin{enumerate}
    \item Set $\mathscr{A}(\mathcal{F}) = \{\mathbb{N}\}$.
    \item For each $i \in \{1, \ldots, n\}$, set $\mathscr{A}_i = \mathscr{A}(\{S_i\})$.
    \item For each $(T_1, \ldots, T_n) \in \mathscr{A}_1 \times \cdots \times \mathscr{A}_n$, do \[\mathscr{A}(\mathcal{F}) = \mathscr{A}(\mathcal{F}) \cup \{T_1 \cap cdots \cap T_n\}.\]
    \item Return $\mathscr{A}(\mathcal{F})$.
\end{enumerate}
\end{algorithm}

\begin{example}\label{ex12}
Let $\mathcal{F} = \{\langle 2,5 \rangle, \langle 3,5,7 \rangle\}$. By \cite[Algorithm 23]{ojeda}, we have that 
\[\mathscr{A}(\{\langle 2,5 \rangle\} = \{\mathbb{N}, \langle 2,3 \rangle, \langle 2,5 \rangle\}\]
and that
\[\mathscr{A}(\{\langle 3,5,7 \rangle\} = \{\mathbb{N}, \langle 2,3 \rangle, \langle 
3,4,5 \rangle, \langle 
3,5,7 \rangle\}.\] Therefore, by Algorithm \ref{alg11}, we conclude that \[\mathscr{A}(\mathcal{F}) = \{ \mathbb{N}, \langle 2,3 \rangle, 
 \langle 2,5 \rangle,  \langle 3,4,5 \rangle,  \langle 3,5,7 \rangle,  \langle 4,5,6,7 \rangle,  \langle 5,6,7,8,9 \rangle \}.\]
\end{example}

The function \texttt{ArithmeticExtensions}, given by the second and third authors in \cite[pp. 3714--3715]{ojeda}, uses the package \texttt{NumericalSpgs} (\cite{numericalsgps}) of GAP (\cite{GAP}) to calculate $\mathscr{A}(\{S\})$ with $S$ being a numerical semigroup. Therefore, by Algorithm \ref{alg11}, we can compute $\mathscr{A}(\mathcal{F})$, with $\mathcal{F}$ being a family of numerical semigroup, with following code:

\begin{verbatim}
  SmallestArithmeticVariety:=function(F);
    local AF,A,S;
    AF:=[NumericalSemigroup(1)];
    A:=[];
    for S in F do 
      Append(A,[ArithmeticExtensions(S)]);
    od;
    Append(AF,List(Cartesian(A),i->Intersection(i)));
    return Set(AF);
  end;  
\end{verbatim}
For example, if $\mathcal{F} = \{\langle 2,5 \rangle, \langle 3,5,7 \rangle\}$ we write
\begin{verbatim}
  F:=[[2,5],[3,5,7]];
  F:=List(F,i->NumericalSemigroup(i));
  SmallestArithmeticVariety(F);
\end{verbatim}
provided that the package \texttt{NumericalSgps} and the function \texttt{ArithmeticExtensions} have already been loaded into GAP. 

\section{$\mathscr{A}-$system of generators}

Throughout this section, $\mathscr{A}$ denotes an arithmetic variety. By Proposition \ref{Prop1}, the intersection of finitely many elements in $\mathscr{A}$ is an element of $\mathscr{A}$. This does not occur at the intersection of infinitely many elements, as the following example evidences.

\begin{example}\label{ex13}
The set $\mathscr{A} = \left\{ \{0,n,n+1, \ldots \} \mid n \in \mathbb{N} \setminus \{0\} \right\}$ is an arithmetic variety; however, $\bigcap_{n \in \mathbb{N} \setminus \{0\}} \{0, n, n+1, \ldots \} = \{0\} \not\in \mathscr{A}$.    
\end{example}

Despite the previous example, the arbitrary intersection of elements in $\mathscr{A}$ is always a submonoid of $(\mathbb{N},+)$. This fact gives meaning to the following definition.

\begin{definition}
Given an arithmetic variety $\mathscr{A}$. An \emph{$\mathscr{A}-$monoid} is a submonoid of $(\mathbb{N},+)$ that can be written as an intersection of elements of $\mathscr{A}$.
\end{definition}

Thus, given $X \subseteq \mathbb{N}$, we have that the intersection of all elements in $\mathscr{A}$ containing $X$ is the \emph{smallest $\mathscr{A}-$monoid containing $X$} that we denote by $\mathscr{A}[X]$. Now, if $M$ is an $\mathscr{A}-$monoid such that $M = \mathscr{A}[X]$, then we say that $X$ is a \emph{$\mathscr{A}-$system of generators of $M$}. Moreover, if $M \neq \mathcal{A}[Y]$, for every $Y \subsetneq X$, then we say that $X$ is a \emph{minimal $\mathscr{A}-$system of generators of $M$}.

Let us see that there are $\mathscr{A}-$monoids having non-unique minimal $\mathscr{A}-$systems of generators, for a given arithmetic variety $\mathscr{A}$; but let us first recall the notion of fundamental gap and a result from \cite{ojeda}.

\begin{definition}
Let $S$ be a numerical semigroup. An element $x \in \mathbb{N} \setminus \{S\}$ is a \emph{fundamental gap} of $S$ if $\{k\, x \mid k \in \mathbb{N} \setminus \{1\}\} \subseteq S$. We write $\operatorname{FG}(S)$ for the set of fundamental gaps of $S$. 
\end{definition}

By Corollary \ref{Lema15}, the following result is nothing more than a reformulation of \cite[Proposition 6]{ojeda}, we include it here for complete exposition and ease of reading.

\begin{proposition}\label{Prop16}
If $S \neq \mathbb{N}$ is a numerical semigroup, then the following holds:
\begin{enumerate}[(a)]
    \item $\max_{\subseteq}(\mathscr{A}(\{S\})) = \mathbb{N}$,
    \item $\min_{\subseteq}(\mathscr{A}(\{S\})) = S$,
    \item $\max_{\subseteq}(\mathscr{A}(\{S\}) \setminus \{\mathbb{N}\}) = \langle 2,3 \rangle$,
    \item $\min_{\subseteq}(\mathscr{A}(\{S\}) \setminus \{S\}) = S \cup \operatorname{FG}(S).$
\end{enumerate}
\end{proposition}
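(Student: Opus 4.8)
The plan is to check the four items in turn, the first three being quick and item~(d) carrying the real content.

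Items (a) and (b) are immediate from Propositions~\ref{Prop6} and~\ref{Prop8}. For (a): $\mathbb{N}\in\mathscr{A}(\{S\})$ by Proposition~\ref{Prop6}, and every element of $\mathscr{A}(\{S\})$ is a subset of $\mathbb{N}$, so $\mathbb{N}$ is the largest element. For (b): since $S\neq\mathbb{N}$ we have $1\in\mathbb{N}\setminus S$, so $S=\frac{S}{1}\in\mathscr{A}(\{S\})$; moreover $S\subseteq\frac{S}{d}$ for every positive integer $d$ (arithmetic extensions contain the base semigroup), so $S$ is contained in every element of $\mathscr{A}(\{S\})$, hence is its smallest element.

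For (c), note first that $\mathscr{A}(\{S\})\setminus\{\mathbb{N}\}$ is non-empty since it contains $S$. The key computation is $\langle 2,3\rangle=\frac{S}{\operatorname{F}(S)}$: indeed $\operatorname{F}(S)\cdot 0=0\in S$, $\operatorname{F}(S)\cdot 1=\operatorname{F}(S)\notin S$, and $\operatorname{F}(S)\cdot x>\operatorname{F}(S)$ lies in $S$ for every $x\geq 2$, so $\frac{S}{\operatorname{F}(S)}=\mathbb{N}\setminus\{1\}=\langle 2,3\rangle$; since $\operatorname{F}(S)\in\mathbb{N}\setminus S$, Proposition~\ref{Prop6} gives $\langle 2,3\rangle\in\mathscr{A}(\{S\})$, and it is distinct from $\mathbb{N}$. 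Finally, any $T\in\mathscr{A}(\{S\})$ with $T\neq\mathbb{N}$ cannot contain $1$ (a numerical semigroup containing $1$ is $\mathbb{N}$), so $T\subseteq\mathbb{N}\setminus\{1\}=\langle 2,3\rangle$; thus $\langle 2,3\rangle$ is the maximum of $\mathscr{A}(\{S\})\setminus\{\mathbb{N}\}$.

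For (d) I would first record that $\operatorname{F}(S)\in\operatorname{FG}(S)$, so $S\cup\operatorname{FG}(S)\supsetneq S$, and then prove two things: (i) $S\cup\operatorname{FG}(S)\in\mathscr{A}(\{S\})$; (ii) $S\cup\operatorname{FG}(S)\subseteq T$ for every $T\in\mathscr{A}(\{S\})\setminus\{S\}$. For (i): if $S=\langle 2,3\rangle$ then $\operatorname{FG}(S)=\{1\}$ and $S\cup\operatorname{FG}(S)=\mathbb{N}\in\mathscr{A}(\{S\})$; otherwise $D:=\{d\in\mathbb{N}\setminus S\mid d\geq 2\}$ is a non-empty finite subset of $\mathbb{N}\setminus S$ (finite because $S$ is a numerical semigroup), so $\bigcap_{d\in D}\frac{S}{d}\in\mathscr{A}(\{S\})$ by Proposition~\ref{Prop6}, and I claim this intersection equals $S\cup\operatorname{FG}(S)$. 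The inclusion $\supseteq$ is routine: $S\subseteq\frac{S}{d}$ always, and $x\in\operatorname{FG}(S)$ forces $dx\in S$ for every $d\geq 2$, hence $x\in\frac{S}{d}$ for every $d\in D$. For $\subseteq$, take $x$ in the intersection with $x\notin S$ and suppose, towards a contradiction, that $kx\notin S$ for some $k\geq 2$; let $k_0$ be the largest integer with $k_0x\notin S$ (there are finitely many such, and at least one is $\geq 2$, so $k_0\geq 2$). Since $x\geq 1$ and every element of $S$ multiplied by $x$ stays in $S$, the fact $k_0x\notin S$ forces $k_0\notin S$, hence $k_0\in D$; but then $x\in\frac{S}{k_0}$ gives $k_0x\in S$, a contradiction. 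Thus every $kx$ with $k\geq 2$ lies in $S$, i.e., $x\in\operatorname{FG}(S)$, proving $\bigcap_{d\in D}\frac{S}{d}\subseteq S\cup\operatorname{FG}(S)$. For (ii): let $T\in\mathscr{A}(\{S\})\setminus\{S\}$; if $T=\mathbb{N}$ there is nothing to show, and otherwise write $T=\bigcap_{i=1}^n\frac{S}{d_i}$ with $d_i\in\mathbb{N}\setminus S$ (Proposition~\ref{Prop6}). No $d_i$ can equal $1$, since $\frac{S}{1}=S$ would force $T\subseteq S$ and hence $T=S$; so every $d_i\geq 2$, and then $S\subseteq T$ and, exactly as above, $\operatorname{FG}(S)\subseteq T$, so $S\cup\operatorname{FG}(S)\subseteq T$. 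Combining (i), (ii) and the strict containment, $S\cup\operatorname{FG}(S)$ is the minimum of $\mathscr{A}(\{S\})\setminus\{S\}$.

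I expect the only genuinely non-bookkeeping step to be the inclusion $\bigcap_{d\in D}\frac{S}{d}\subseteq S\cup\operatorname{FG}(S)$ in (d): the idea there is to pass to the \emph{largest} multiple $k_0x$ of $x$ that is a gap and observe that it must itself be a gap that is $\geq 2$, hence an admissible index, which forces the contradiction. Everything else reduces to the description of $\mathscr{A}(\{S\})$ in Proposition~\ref{Prop6} and the containment $\mathscr{A}(\{S\})\subseteq\{T\mid S\subseteq T\}$ established in the proof of Proposition~\ref{Prop8}. Alternatively, as the surrounding text notes, (a)--(d) can be obtained by translating \cite[Proposition~6]{ojeda} through Corollary~\ref{Lema15}; the self-contained argument above is of comparable length.
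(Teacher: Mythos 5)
Your argument is correct. Note, though, that the paper does not actually prove Proposition~\ref{Prop16}: it dismisses it as ``nothing more than a reformulation of [Ojeda--Rosales, Proposition~6]'' via Corollary~\ref{Lema15}, so the comparison here is between a citation and your self-contained derivation from Proposition~\ref{Prop6}. Items (a)--(c) are handled exactly as one would expect (the observation $\frac{S}{\operatorname{F}(S)}=\mathbb{N}\setminus\{1\}=\langle 2,3\rangle$ is the right witness for (c), and it is valid since $\operatorname{F}(S)\geq 1$ when $S\neq\mathbb{N}$). The real content you supply is the identity
\[
S\cup\operatorname{FG}(S)\;=\;\bigcap_{d\in\mathbb{N}\setminus S,\ d\geq 2}\frac{S}{d},
\]
which is essentially the substance of the cited result from \cite{ojeda}; your proof of the inclusion $\subseteq$ via the largest $k_0$ with $k_0x\notin S$ is sound --- the step ``$k_0\in S$ would force $k_0x\in S$'' is justified because $k_0x$ is the sum of $x\geq 1$ copies of $k_0$ --- and you correctly isolate the degenerate case $S=\langle 2,3\rangle$, where the index set would be empty and the minimum of $\mathscr{A}(\{S\})\setminus\{S\}$ is $\mathbb{N}$ itself. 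The trade-off is the obvious one: the paper's route keeps the exposition short at the cost of sending the reader to \cite{ojeda}, while your argument makes the section logically independent of that reference at the cost of roughly a page; since the later Corollary~\ref{Cor_FG} and Example~\ref{ex17} rest entirely on item (d), having the proof in hand is arguably worth it.
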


\begin{corollary}\label{Cor_FG}
Let $S \neq \mathbb{N}$ be a numerical semigroup. Then $\mathscr{A}(\{S\})[\{x\}] = 
S \cup \operatorname{FG}(S)$, for every $x \in \operatorname{FG}(S)$. 
\end{corollary}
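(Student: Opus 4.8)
The plan is to prove the two inclusions $\mathscr{A}(\{S\})[\{x\}] \subseteq S \cup \operatorname{FG}(S)$ and $\mathscr{A}(\{S\})[\{x\}] \supseteq S \cup \operatorname{FG}(S)$, using Proposition \ref{Prop16}(d) as the principal tool. Throughout, fix $x \in \operatorname{FG}(S)$; note that since $x$ is a gap of $S$ and $S \neq \mathbb{N}$, the numerical semigroup $S \cup \operatorname{FG}(S)$ is an element of $\mathscr{A}(\{S\})$ by Proposition \ref{Prop16}(d), and it strictly contains $S$.

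For the inclusion $\subseteq$: by definition, $\mathscr{A}(\{S\})[\{x\}]$ is the intersection of all elements of $\mathscr{A}(\{S\})$ containing $x$. Since $x \in \operatorname{FG}(S) \subseteq S \cup \operatorname{FG}(S)$ and $S \cup \operatorname{FG}(S) \in \mathscr{A}(\{S\})$, the semigroup $S \cup \operatorname{FG}(S)$ is one of the sets being intersected, hence $\mathscr{A}(\{S\})[\{x\}] \subseteq S \cup \operatorname{FG}(S)$.

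For the reverse inclusion $\supseteq$: let $T \in \mathscr{A}(\{S\})$ with $x \in T$; I must show $S \cup \operatorname{FG}(S) \subseteq T$. First, $S \subseteq T$ because $S = \min_{\subseteq}(\mathscr{A}(\{S\}))$ by Proposition \ref{Prop16}(b), so $S$ is contained in every element of $\mathscr{A}(\{S\})$. Next, since $x \in T$ and $x \notin S$ (as $x$ is a gap), we have $T \neq S$, so $T$ contains the minimal element of $\mathscr{A}(\{S\}) \setminus \{S\}$, which is $S \cup \operatorname{FG}(S)$ by Proposition \ref{Prop16}(d); that is, $T \supseteq S \cup \operatorname{FG}(S)$. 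Intersecting over all such $T$ gives $\mathscr{A}(\{S\})[\{x\}] \supseteq S \cup \operatorname{FG}(S)$, and combining the two inclusions yields the equality.

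I do not anticipate a serious obstacle here: the statement is essentially an unwinding of the definition of $\mathscr{A}[X]$ together with the lattice-theoretic facts already packaged in Proposition \ref{Prop16}. The one point requiring a moment's care is the observation that any $T \in \mathscr{A}(\{S\})$ containing an $x \in \operatorname{FG}(S)$ must be strictly larger than $S$ — which is immediate since $\operatorname{FG}(S) \cap S = \emptyset$ — so that the "minimal element above $S$" characterization of $S \cup \operatorname{FG}(S)$ applies. Everything else is routine.
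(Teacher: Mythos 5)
Your proof is correct and follows the same route as the paper: both arguments reduce to the observation that $x \notin S$ together with Proposition \ref{Prop16}(b) and (d), which identify $S \cup \operatorname{FG}(S)$ as the smallest element of $\mathscr{A}(\{S\})$ containing any gap of $S$. You have merely spelled out the two inclusions that the paper's one-line proof leaves implicit.
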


\begin{proof}
If $x \in \operatorname{FG}(S)$, then $x \not\in S$. So, by Proposition \ref{Prop16}, the smallest element of $\mathscr{A}(\{S\})$ that contains $\{x\}$ is $S \cup \operatorname{FG}(S)$ .
\end{proof}

\begin{example}\label{ex17}
Let $S = \langle 5,7,9 \rangle$. By direct computation, one can check that $\operatorname{FG}(S) = \{6,8,11,13\}$. Therefore, by Corollary \ref{Cor_FG}, we have that $\{6\}, \{8\}, \{11\}$ and $\{13\}$ are minimal $\mathscr{A}(\{S\})-$systems of generators of $S \cup \operatorname{FG}(S) = \langle 5,6,7,8,9 \rangle$.
\end{example}

Despite the previous example, there are arithmetic varieties, $\mathcal{A}$, in which all $\mathcal{A}-$mo\-noids have unique minimal $\mathcal{A}-$system of generators. To show one of them, we first recall several notions and results on proportionally modular numerical semigroups and its generalizations.

Let $a, b$ and $c$ be positive integers. If $a x\!\! \mod b$ denotes the remainder of the Euclidean division of $a x$ by $b$, the set \[\{x \in \mathbb{N} \mid a x\!\!\! \mod b \leq c x\}\] is a numerical semigroup called \emph{proportionally modular numerical semigroup}  (see \cite{proportional, modulares} for more details).

Recall that $\operatorname{ED}(e) = \{S \in \mathscr{L} \mid \operatorname{e}(S) = e\}$ is the set of numerical semigroups of embedding dimension $e$. The following results follows from \cite[Proposition 41]{sistemas} and \cite[Theorem 12]{sistemas}, respectively.

\begin{proposition}\label{Prop18}
The arithmetic variety $\mathscr{A}(\operatorname{ED}(2))$ is equal to the set of intersections of finitely many proportionally modular numerical semigroups.
\end{proposition}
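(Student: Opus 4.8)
The plan is to turn $\mathscr{A}(\operatorname{ED}(2))$ into an explicit family of intersections of quotients of two‑generated numerical semigroups and then match that family with intersections of proportionally modular ones. First I would record that $\operatorname{ED}(2)=\{\langle a,b\rangle \mid 2\leq a<b,\ \gcd(a,b)=1\}$ and that $\mathbb{N}\in\mathscr{A}(\operatorname{ED}(2))$ anyway (adding $\mathbb{N}$, i.e.\ the case $a=1$, changes nothing). By Theorem~\ref{th9}, every element of $\mathscr{A}(\operatorname{ED}(2))$ is a finite intersection $\bigcap_{i=1}^n T_i$ with $T_i\in\mathscr{A}(\{\langle a_i,b_i\rangle\})$, and by Corollary~\ref{Lema15} each $T_i$ is an arithmetic extension of $\langle a_i,b_i\rangle$, hence itself a finite intersection of quotients $\frac{\langle a_i,b_i\rangle}{d}$; conversely, since $\mathscr{A}(\{\langle a,b\rangle\})\subseteq\mathscr{A}(\operatorname{ED}(2))$ for every two‑generated $\langle a,b\rangle$ and $\mathscr{A}(\operatorname{ED}(2))$ is closed under intersection, every finite intersection of semigroups $\frac{\langle a,b\rangle}{d}$ lies in $\mathscr{A}(\operatorname{ED}(2))$. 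This gives
\[
\mathscr{A}(\operatorname{ED}(2)) = \left\{ \bigcap_{i=1}^{n} \frac{\langle a_i, b_i\rangle}{d_i} \mid n\geq 1,\ a_i<b_i,\ \gcd(a_i,b_i)=1,\ d_i\geq 1 \right\},
\]
so the statement is equivalent to saying that the family $\bigl\{\frac{\langle a,b\rangle}{d}\bigr\}$ and the family of proportionally modular numerical semigroups generate the same subsets of $\mathscr{L}$ under finite intersections.

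For the first (easy) inclusion I would check that every $\frac{\langle a,b\rangle}{d}$ is proportionally modular: the two‑generated semigroup $\langle a,b\rangle$ is proportionally modular — a well‑known fact about numerical semigroups of embedding dimension at most two, see \cite{proportional,modulares} — and proportional modularity is stable under quotients, since if $S=\{x\in\mathbb{N}\mid ax\bmod b\leq cx\}$ then $\frac{S}{d}=\{y\in\mathbb{N}\mid (ad)y\bmod b\leq (cd)y\}$, which is again of this form. Combined with the displayed description of $\mathscr{A}(\operatorname{ED}(2))$ (and Lemma~\ref{Lema4} to distribute quotients over intersections), this shows that $\mathscr{A}(\operatorname{ED}(2))$ is contained in the set of finite intersections of proportionally modular numerical semigroups.

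The remaining inclusion — that every finite intersection of proportionally modular numerical semigroups lies in $\mathscr{A}(\operatorname{ED}(2))$ — is where the real content lies, and this is where I expect the main obstacle. Since $\mathscr{A}(\operatorname{ED}(2))$ is closed under finite intersection, it suffices to show that a single proportionally modular numerical semigroup $T$ is a finite intersection of quotients $\frac{\langle a,b\rangle}{d}$, and I would invoke \cite[Proposition~41]{sistemas} for this. The mechanism behind it is the representation of $T$ as the numerical semigroup $\langle[\alpha,\beta]\rangle$ generated by a rational interval, the identity $\langle[\alpha,\beta]\rangle=\frac{\langle A,A+1,\ldots,B\rangle}{m}$ for an appropriate integer interval $[A,B]$ and integer $m$, and the identity $\langle A,A+1,\ldots,B\rangle=\frac{\langle A,B\rangle}{B-A}$ valid when $\gcd(A,B)=1$ (the general case being reduced to this after absorbing common factors), after which Lemma~\ref{Lema4} collapses the composition of quotients to a single $\frac{\langle a,b\rangle}{d}$. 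Everything else — the reduction via Theorem~\ref{th9} and Corollary~\ref{Lema15}, the stability of proportional modularity under quotients, the stability of $\mathscr{A}(\operatorname{ED}(2))$ under intersection — is formal; it is this interval machinery, together with its coprimality bookkeeping, packaged in \cite[Proposition~41]{sistemas}, on which the argument ultimately rests.
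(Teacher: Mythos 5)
Your argument is correct and takes essentially the same route as the paper, which states this result without proof as a direct consequence of \cite[Proposition 41]{sistemas}: your formal reductions via Theorem~\ref{th9}, Corollary~\ref{Lema15} and Lemma~\ref{Lema4} are sound, and the substantive content --- that the quotients $\frac{\langle a,b\rangle}{d}$ are precisely the proportionally modular numerical semigroups --- is correctly delegated to \cite[Theorem 5]{full} and \cite[Proposition 41]{sistemas}, exactly where the paper places it. You also rightly identify that this external characterization is the only non-formal step.
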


\begin{corollary}
Every $\mathscr{A}(\operatorname{ED}(2))-$monoid has a unique minimal $\mathscr{A}(\operatorname{ED}(2))-$system of generators. 
\end{corollary}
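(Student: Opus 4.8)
The plan is to split the argument into a purely formal reduction, valid for every arithmetic variety, and a structural input about proportionally modular semigroups supplied by Proposition~\ref{Prop18}.

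\emph{Step 1: reduction to an ``essential generators'' statement.} For an arbitrary arithmetic variety $\mathscr{A}$, the map $X \mapsto \mathscr{A}[X]$ on subsets of $\mathbb{N}$ is a closure operator, and it is finitary in the sense that $\mathscr{A}[X] = \mathscr{A}[\operatorname{msg}(\langle X \rangle)]$, since $\langle X \rangle \subseteq \mathscr{A}[X]$ and $\langle X\rangle$ has a finite minimal system of generators. Two routine consequences follow. First, every minimal $\mathscr{A}$-system of generators $Y$ of an $\mathscr{A}$-monoid $M$ is finite: minimality forces $y \notin \langle Y\setminus\{y\}\rangle$ for each $y \in Y$, i.e. $Y = \operatorname{msg}(\langle Y\rangle)$. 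Second, writing $\operatorname{E}_\mathscr{A}(M) := \{x \in M \mid \mathscr{A}[M\setminus\{x\}] \subsetneq M\}$ (note $\operatorname{E}_\mathscr{A}(M) \subseteq \operatorname{msg}(M)$, because any $x \in M\setminus\operatorname{msg}(M)$ is a sum of two smaller nonzero elements of $M$ and is therefore recovered from $M\setminus\{x\}$), every $\mathscr{A}$-system of generators of $M$ contains $\operatorname{E}_\mathscr{A}(M)$, since omitting some $x \in \operatorname{E}_\mathscr{A}(M)$ confines the generators to $M\setminus\{x\}$, whose $\mathscr{A}$-closure is strictly below $M$. Hence $M$ has a unique minimal $\mathscr{A}$-system of generators if and only if $\mathscr{A}[\operatorname{E}_\mathscr{A}(M)] = M$, in which case that unique system equals $\operatorname{E}_\mathscr{A}(M)$: if $\mathscr{A}[\operatorname{E}_\mathscr{A}(M)] \subsetneq M$, any minimal system $Y$ strictly contains $\operatorname{E}_\mathscr{A}(M)$, and deleting from $M$ some $y \in Y\setminus\operatorname{E}_\mathscr{A}(M)$ (which is removable) and re-shrinking a finite generating subset of $M\setminus\{y\}$ yields a \emph{different} minimal system. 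So the Corollary is equivalent to: $\mathscr{A}(\operatorname{ED}(2))[\operatorname{E}_{\mathscr{A}(\operatorname{ED}(2))}(M)] = M$ for every $\mathscr{A}(\operatorname{ED}(2))$-monoid $M$; equivalently, the closure $X\mapsto\mathscr{A}(\operatorname{ED}(2))[X]$ obeys the anti-exchange axiom (if $x \neq y$, $x,y\notin\mathscr{A}(\operatorname{ED}(2))[A]$ and $y\in\mathscr{A}(\operatorname{ED}(2))[A\cup\{x\}]$, then $x\notin\mathscr{A}(\operatorname{ED}(2))[A\cup\{y\}]$), i.e. $\mathscr{A}(\operatorname{ED}(2))$ is a convex geometry.

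\emph{Step 2: bring in the proportionally modular structure.} By Proposition~\ref{Prop18}, the members of $\mathscr{A}(\operatorname{ED}(2))$ are the finite intersections of proportionally modular numerical semigroups, so for finite $X$ we get $\mathscr{A}(\operatorname{ED}(2))[X] = \bigcap\{P \mid P \text{ proportionally modular},\ X\subseteq P\}$, and ``$x\notin\mathscr{A}(\operatorname{ED}(2))[X]$'' means some proportionally modular $P\supseteq X$ misses $x$. The anti-exchange check then becomes: if $x$ and $y$ can each be avoided over $A$ by a proportionally modular semigroup, and every proportionally modular semigroup containing $A\cup\{x\}$ also contains $y$, then some proportionally modular semigroup contains $A\cup\{y\}$ but not $x$. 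I would settle this using the explicit theory of proportionally modular semigroups — their description as quotients of interval-generated numerical semigroups together with the Bézout-sequence calculus — which identifies exactly which integers are forced into the smallest proportionally modular over-semigroup of a prescribed finite set, and leaves enough freedom in the modular data to separate $x$ from $y$ in the required direction.

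The hard part is precisely this last point: a workable description of $\bigcap\{P \mid P \text{ proportionally modular},\ X\subseteq P\}$ for finite $X$, i.e. of which integers every proportionally modular over-semigroup of $X$ must contain. This is where the formal reduction stops helping and one must invoke \cite{sistemas} (the statement is essentially \cite[Theorem~12]{sistemas}). That some such input is genuinely needed is shown by Example~\ref{ex17}: for $\mathscr{A}(\{S\})$ the anti-exchange axiom fails, because by Proposition~\ref{Prop16} that variety is a chain from $S$ to $\mathbb{N}$, so all fundamental gaps of $S$ share the same $\mathscr{A}(\{S\})$-closure and none of them is essential.
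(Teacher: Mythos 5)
The paper offers essentially no argument for this corollary: it is presented as following directly from \cite[Theorem 12]{sistemas} (via Proposition \ref{Prop18}), and that citation is the whole proof. Your proposal ends up in the same place --- Step 2 openly defers the structural content to \cite[Theorem 12]{sistemas} --- so in substance you and the paper rest on the identical external input, and you have identified that input correctly. What you add, and the paper does not, is the Step 1 reduction, which checks out: the closure $X\mapsto\mathscr{A}[X]$ is finitary because $\mathscr{A}[X]=\mathscr{A}[\operatorname{msg}(\langle X\rangle)]$, minimal $\mathscr{A}$-systems of generators are finite, every system of generators contains the essential set $\operatorname{E}_{\mathscr{A}}(M)$, and uniqueness for $M$ is equivalent to $\mathscr{A}[\operatorname{E}_{\mathscr{A}}(M)]=M$. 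The contrast with Example \ref{ex17} is also the right sanity check that some structural input about $\operatorname{ED}(2)$ is unavoidable (though note $\mathscr{A}(\{S\})$ need not be a chain; Proposition \ref{Prop16}(d) alone already gives that all fundamental gaps of $S$ have the same closure, hence non-uniqueness there). The one caveat: Step 2 as written is a plan, not a proof --- the anti-exchange verification for proportionally modular semigroups (``enough freedom in the modular data to separate $x$ from $y$'') is never actually carried out, so as a self-contained argument the proposal has a gap exactly where you flag it. Since the paper itself supplies nothing beyond the citation at that point, I count your proposal as matching the paper's proof in substance, with an extra layer of correct formal bookkeeping that the paper skips.
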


We finish this section by recalling and proposing some open problems.

\subsection*{Some open problems} 
In \cite[Theorem 5]{full}, it is proved that a numerical semigroup is proportionally modular if and only if it is the quotient of a numerical semigroup of embedding dimension $2$ by a positive integer. So, \cite[Theorem 31]{theset} provides an algorithm to decide whether a numerical semigroup belongs to $\left\{\frac{S}d \mid S \in \operatorname{ED}(2)\ \text{and}\ d \in \mathbb{N} \setminus \{0\}\right\}$.

In \cite{problem}, the problem of finding a numerical semigroup that cannot be written as the quotient of a element of $\operatorname{ED}(3)$ by a positive integer is proposed. In \cite{canadian}, it is proved its existence; however no example is given. Recently, in \cite{australia}, some examples are exhibited. Have an algorithm to decide whether a numerical semigroup belongs to $\left\{\frac{S}d \mid S \in \operatorname{ED}(3)\ \text{and}\ d \in \mathbb{N} \setminus \{0\}\right\}$ is still an open problem.

By Proposition \ref{Prop18} and the results in \cite{sistemas}, one can deduce an algorithm to decide whether a numerical semigroup belongs to $\mathscr{A}(\operatorname{ED}(2))$. We propose as an open problem to formulate the corresponding algorithm for $\mathscr{A}(\operatorname{ED}(3))$ and, being optimistic, for $\mathscr{A}(\operatorname{ED}(e)),\ e \geq 4$. 

\section{The tree associated with an arithmetic variety}

If $\mathscr{A}$ is an arithmetic variety, then we define the directed graph $\mathcal{G}_\mathscr{A}$, whose vertex set is $\mathscr{A}$, having an edge from $T \in \mathscr{A}$ to $S \in \mathscr{A} \setminus \{\mathbb{N}\}$ if and only if $T = \frac{S}2$; equivalently, such that the set of children of $S \in \mathscr{A} \setminus \{\mathbb{N}\}$ is $\mathcal{D}_2(S) \cap \mathscr{A}$, where $\mathcal{D}_2(S) = \{T \in \mathscr{L} \mid S = \frac{T}2\}$. 

\begin{theorem}\label{Th22}
If $\mathcal{A}$ is an arithmetic variety, then $\mathcal{G}_\mathscr{A}$ is a directed rooted tree with root $\mathbb{N}$.
\end{theorem}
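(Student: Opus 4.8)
The plan is to verify the three defining properties of a directed rooted tree with root $\mathbb{N}$: that $\mathbb{N}$ has no parent (no outgoing edge in the chosen orientation, equivalently it is the unique source pointing toward nothing above it), that every vertex $S \in \mathscr{A} \setminus \{\mathbb{N}\}$ has exactly one parent, and that from every vertex there is a (necessarily unique) directed path to $\mathbb{N}$. Recall that in $\mathcal{G}_\mathscr{A}$ there is an edge $T \to S$ precisely when $T = \frac{S}{2}$ and $S \neq \mathbb{N}$; so the parent of $S$ is $\frac{S}{2}$.

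First I would check that parents are well-defined and stay inside $\mathscr{A}$. If $S \in \mathscr{A}$, then $\frac{S}{2}$ is an arithmetic extension of $S$ (it equals $\frac{S}{d}$ with $d = 2$), hence $\frac{S}{2} \in \mathscr{A}$ by Proposition \ref{Prop1}(b); and $\frac{S}{2}$ is a numerical semigroup by \cite[Proposition 5.1]{libro}. So each $S \in \mathscr{A} \setminus \{\mathbb{N}\}$ has exactly one candidate parent, namely $\frac{S}{2}$, which indeed lies in $\mathscr{A}$, giving the edge $\frac{S}{2} \to S$. Uniqueness of the parent is immediate from the definition of the edge relation: the tail of an edge into $S$ is forced to be $\frac{S}{2}$. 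Next, $\mathbb{N}$ has no parent by fiat, since edges only point to vertices in $\mathscr{A} \setminus \{\mathbb{N}\}$; and no vertex other than $\mathbb{N}$ can fail to have a parent, by the previous sentence. It remains to rule out that $\mathbb{N}$ is the child of some vertex or that the parent map has a cycle — equivalently, to show that iterating $S \mapsto \frac{S}{2}$ reaches $\mathbb{N}$ in finitely many steps.

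The key quantitative step is this: if $S$ is a numerical semigroup with $S \neq \mathbb{N}$, then $\operatorname{F}\!\left(\frac{S}{2}\right) < \operatorname{F}(S)$; in fact $\operatorname{F}\!\left(\frac{S}{2}\right) \le \lfloor \operatorname{F}(S)/2 \rfloor$. Indeed, if $x > \operatorname{F}(S)/2$ then $2x > \operatorname{F}(S)$, so $2x \in S$, whence $x \in \frac{S}{2}$; this shows $\operatorname{F}\!\left(\frac{S}{2}\right) \le \lfloor \operatorname{F}(S)/2\rfloor < \operatorname{F}(S)$ as soon as $\operatorname{F}(S) \ge 1$, i.e.\ as soon as $S \neq \mathbb{N}$ (note $\operatorname{F}(\mathbb{N})$ is conventionally $-1$, so no numerical semigroup other than $\mathbb{N}$ has $\operatorname{F} < 1$; and $\langle 1 \rangle = \mathbb{N}$). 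Consequently the sequence $S, \frac{S}{2}, \frac{S}{2^2}, \ldots$ has strictly decreasing Frobenius numbers until it hits $\mathbb{N}$, which it must after at most $\lceil \log_2(\operatorname{F}(S)+2)\rceil$ steps. This simultaneously shows there are no directed cycles and that every vertex has a finite directed path to the root $\mathbb{N}$; together with the unique-parent property, the connected component of each vertex is a tree and all vertices lie in the component of $\mathbb{N}$, so $\mathcal{G}_\mathscr{A}$ is a rooted tree with root $\mathbb{N}$. Finally I would note the equivalent phrasing already recorded in the statement: the children of $S$ are exactly $\mathcal{D}_2(S) \cap \mathscr{A}$, since $T \to S$ iff $T \in \mathscr{A}$ and $\frac{T}{2} = S$, i.e.\ $T \in \mathcal{D}_2(S) \cap \mathscr{A}$.

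I do not expect a serious obstacle here; the one point requiring a little care is the bookkeeping around $\mathbb{N}$ itself — making sure the convention $\operatorname{F}(\mathbb{N}) = -1$ (or simply treating $\mathbb{N}$ as the terminal case by hand) is handled cleanly so that the strict-decrease argument genuinely terminates at $\mathbb{N}$ and nowhere else, and that $\frac{\mathbb{N}}{2} = \mathbb{N}$ does not create a spurious self-loop (it does not, because the edge relation excludes $S = \mathbb{N}$ as a head).
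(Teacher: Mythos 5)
Your proof is correct and follows essentially the same route as the paper: both iterate the parent map $S \mapsto \frac{S}{2}$, use closure of $\mathscr{A}$ under quotients (Proposition \ref{Prop1} together with Lemma \ref{Lema4}) to stay inside the variety, and show the iteration terminates at $\mathbb{N}$, with uniqueness of the path coming from the fact that each $S \neq \mathbb{N}$ has the single candidate parent $\frac{S}{2}$. The only cosmetic difference is the choice of monovariant for termination: you use the strict decrease $\operatorname{F}\left(\frac{S}{2}\right) \leq \lfloor \operatorname{F}(S)/2 \rfloor < \operatorname{F}(S)$, whereas the paper uses the strict inclusion $S \subsetneq \frac{S}{2}$ combined with the finiteness of $\mathbb{N} \setminus S$; both are valid.
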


\begin{proof}
Recall that a directed rooted tree is a directed graph such that for each vertex there is a unique directed path from or towards a single vertex called root.

First, we notice that $\mathcal{G}_\mathscr{A}$ has no loops. Indeed, if $S \neq \mathbb{N}$, then $\operatorname{F}(S) \in \frac{S}2$ which implies $S \subsetneq \frac{S}2$. Now, given $S \in \mathscr{A} \setminus \{\mathbb{N}\}$, consider the sequence $\{S_n\}_{n \in \mathbb{N}}$ such that $S_0 = S$ and $S_{n+1} = \frac{S_n}2,$ for every $n \in \mathbb{N}$. Since, by Lemma \ref{Lema4}, $S_n = \frac{S}{2^n}$, we have that $S_n \in \mathcal{A}$, for every $n \in \mathbb{N}$, by Proposition \ref{Prop1}. Moreover, since $S_n \subsetneq S_{n+1}$, whenever $S_n \neq \mathbb{N}$ and $\mathbb{N} \setminus S$ has finite cardinality, we conclude that there exists $k \in \mathbb{N}$ such that $S_k = \mathbb{N}$ and $S_{k-1} \subsetneq S_k$. This proves the existence of a directed path in $\mathcal{G}_\mathscr{A}$ from $\mathbb{N}$ to $S \in \mathcal{A}$, the uniqueness follows by the own definition of $\mathcal{G}_\mathscr{A}$.
\end{proof}

In \cite{R-GS2008}, it is shown that $\mathcal{D}_2(S)$ is an infinite set for every $S \in \mathscr{L} \setminus \{\mathbb{N}\}$ which implies that $\mathcal{G}_\mathscr{A}$ is not locally finite. Therefore, it is not possible to give a general algorithm for the computation of the tree $\mathcal{G}_\mathscr{A}$ starting starting from the root nor from any other parent. 

Despite this, in \cite[Theorem 7]{dobles} are described what the elements in $\mathcal{D}_2(S)$ are like in terms of $S$. Let us remember this construction that will be useful to us in the next sections. 

First of all, we notice that 
\[\mathcal{D}_2(\mathbb{N}) = \left\{ \langle 2 ,2n+1 \rangle \mid n \in \mathbb{N} \right\}.\] So, we only need to describe $\mathcal{D}_2(S)$ for $S \neq \mathbb{N}$. 

\begin{definition}\label{Def_ums}
Let $S \subsetneq \mathbb{N}$ be a numerical semigroup and let $m$ be an odd element of $S$. An \emph{upper $m-$set} of $S$ is a subset $H$ of $\mathbb{N} \setminus S$ such that
\begin{enumerate}[(C1)]
    \item $\{h + m \mid h \in H\} \subseteq S$;
    \item $\{h_1 + h_2 + m \mid h_1, h_2 \in H\} \subseteq S$;
    \item $h \in H \Longrightarrow \{x \in \mathbb{N} \setminus S \mid x-h \in S \} \subseteq H$. 
\end{enumerate} 
\end{definition}

Given a triplet $(S,m,H)$, where $S \neq \mathbb{N}$ is a numerical semigroup, $m$ is an odd element of $S$ and $H \subseteq \mathbb{N} \setminus S$, the following The GAP function decides whether $H$ is upper $m-$set of $S$.

\begin{verbatim}
  IsUppermSetOfNumericalSemigroup:=function(S,m,H)
    local C1,C2,C3;
    C1:=Intersection(H+m,Gaps(S));
    if IsEmpty(C1) = false then return(false); fi;
    C2:=Intersection(Set(Cartesian(H,H),i->Sum(i)+m),Gaps(S));
    if IsEmpty(C2) = false then return(false); fi;
    for h in H do
      C3:=Filtered(Gaps(S),i->BelongsToNumericalSemigroup(i-h,S));
      if not(Intersection(C3,H)=C3) then return(false); fi;
    od;
    return(true);
  end;
\end{verbatim}

\begin{example}
Using the script above one can check, as described below, that the set of upper $5-$sets of $S = \langle 4,5,11 \rangle$ are \[\{\{3,7\}, \{3,6,7\}, \{6\}, \{7\}, \{6,7\}\}.\] 
\begin{verbatim}
  LoadPackage("NumericalSgps");
  S:=NumericalSemigroup(4,5,11);
  pow:=Combinations(Gaps(S));
  pow:=Difference(pow,[[]]);
  Filtered(pow,H->IsUppermSetOfNumericalSemigroup(S,5,H));
\end{verbatim}
\end{example}

\begin{notation}\label{Not25}
If $H$ is a \emph{upper $m-$set} of a numerical semigroup $S$ and $m \in S$ is odd, we write $S(m,H)$ for \[\{2s \mid s \in S\} \cup \{2s + m \mid s \in S\} \cup \{2 h + m \mid h \in H\}.\] Thus, if $\operatorname{msg}(S) = \{a_1, \ldots, a_e\}$, then $S(m,H)$ is generated by $\{2 a_1, \ldots, 2 a_e\} \cup \{m\} \cup \{2 h + m \mid h \in H\}$.
\end{notation}

The following result is \cite[Theorem 7]{dobles}.

\begin{theorem}\label{Th24}
If  $S \subsetneq \mathbb{N}$ is a numerical semigroup, then 
\[\mathcal{D}_2(S) = \{ S(m,H) \mid m\ \text{is an odd element of}\ S\ \text{and}\ H\ \text{is an upper $m-$set of}\ S\}.\]
\end{theorem}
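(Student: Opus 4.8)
The plan is to prove the two inclusions separately, working with the characterization $T \in \mathcal{D}_2(S)$ meaning $S = \frac{T}{2}$, i.e. $S = \{x \in \mathbb{N} \mid 2x \in T\}$.

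\medskip

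\noindent\textbf{The inclusion $\supseteq$.} First I would fix an odd $m \in S$ and an upper $m$-set $H$ of $S$, set $T = S(m,H)$, and verify that $T$ is a numerical semigroup with $\frac{T}{2} = S$. To see $T$ is a submonoid closed under addition, I would check all the pairwise sums of elements of the three defining pieces $\{2s\}$, $\{2s+m\}$, $\{2h+m\}$ land back in $T$: sums of two "even" elements are even; an even plus $2s+m$ is of the form $2s'+m$; $(2s+m)+(2s'+m) = 2(s+s'+m)$ is even since $m \in S$; $(2s+m)+(2h+m) = 2(s+h+m)$ and $s+h+m \in S$ by (C1); and $(2h_1+m)+(2h_2+m) = 2(h_1+h_2+m) \in \{2s\}$ by (C2). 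Cofiniteness is clear since $S$ is cofinite. Then I would compute $\frac{T}{2}$: if $x \in S$ then $2x \in T$, so $S \subseteq \frac{T}{2}$; conversely if $2x \in T$, then $2x$ must be one of the even elements $2s$ (it cannot be $2s+m$ or $2h+m$, which are odd, as $m$ is odd), so $x \in S$, giving $\frac{T}{2} = S$.

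\medskip

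\noindent\textbf{The inclusion $\subseteq$.} Conversely, let $T \in \mathcal{D}_2(S)$, so $\frac{T}{2} = S$. Decompose $T$ by parity: the even part is $\{2x \mid x \in T, \ 2x \in T\} = \{2s \mid s \in S\}$ by definition of the quotient. For the odd part, I would let $m$ be the smallest odd element of $T$ (which exists since $T$ is cofinite, hence contains large odds), observe $m = 2 \cdot 0 + m$ forces, after checking $m \in S$: indeed $2 \cdot \frac{?}{}$ — more carefully, since $\frac{T}{2}=S$ and $T$ is a numerical semigroup, one shows $m$ is odd and $m \in S$ by noting $2m \in T$ (as $m + m \in T$) hence $m \in \frac{T}{2} = S$. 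Then every odd element of $T$ can be written as $m + (\text{even element of } T) = m + 2s$ or handled via an exceptional set: define $H = \{h \in \mathbb{N}\setminus S \mid 2h + m \in T\}$, and the remaining odd elements of $T$ are exactly $\{2s+m \mid s \in S\} \cup \{2h+m \mid h \in H\}$ because any odd $t \in T$ satisfies $t - m$ even, $\geq 0$ (by minimality of $m$), so $t = 2k+m$ with $k \in S$ or $k \notin S$; in the latter case $k \in H$ by definition. This gives $T = S(m,H)$. It then remains to verify $H$ is an upper $m$-set: (C1) because $h \in H$ gives $2h+m \in T$ and $(2h+m)+m = 2(h+m) \in T$ (as $2m \in T$), whence $h+m \in S$; (C2) because $(2h_1+m)+(2h_2+m) = 2(h_1+h_2+m) \in T$, so $h_1+h_2+m \in S$; (C3) because if $h \in H$ and $x \in \mathbb{N}\setminus S$ with $x - h \in S$, then $2h+m \in T$ and $2(x-h) \in T$, so their sum $2x + m \in T$ with $x \notin S$, hence $x \in H$.

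\medskip

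\noindent\textbf{Main obstacle.} The delicate point is the inclusion $\subseteq$: one must argue that the smallest odd element $m$ of $T$ lies in $S$ and that \emph{every} odd element of $T$ is $\geq m$ of the correct residual form, i.e. that $T$ has no "isolated" odd element below $m$ — this is automatic by the choice of $m$ as the minimum, but care is needed to see that $t - m \geq 0$ and that the definition of $H$ correctly captures precisely the odd elements whose $(t-m)/2$ falls outside $S$, and then that this $H$ genuinely satisfies (C3), which is the least transparent of the three conditions. I would expect the bookkeeping in checking $T = S(m,H)$ as sets (both directions) to be where most of the care goes, while the monoid and upper-$m$-set verifications are routine once the decomposition is in hand.
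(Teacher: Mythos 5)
Your overall strategy is sound and is essentially the standard proof of this result (the paper itself does not prove Theorem \ref{Th24}; it imports it as \cite[Theorem 7]{dobles}, so your argument is a self-contained reconstruction). The inclusion $\subseteq$ is handled correctly and completely: taking $m$ to be the least odd element of $T$, noting $2m=m+m\in T$ gives $m\in\frac{T}{2}=S$, the parity decomposition $T=2S\cup(2S+m)\cup(2H+m)$ with $H=\{h\in\mathbb{N}\setminus S\mid 2h+m\in T\}$ is exactly right, and your verifications of (C1)--(C3) for this $H$ are correct.

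There is, however, one genuine omission in the inclusion $\supseteq$, and it is precisely the crux of that direction. In checking that $T=S(m,H)$ is closed under addition you enumerate the pairwise sums of the three pieces $A=\{2s\}$, $B=\{2s+m\}$, $C=\{2h+m\}$, but you only treat $A+A$, $A+B$, $B+B$, $B+C$ and $C+C$; the case $A+C$, i.e.\ $2s+(2h+m)=2(s+h)+m$ with $s\in S$ and $h\in H$, is missing (your phrase ``an even plus $2s+m$'' covers only $A+B$). This is not a routine case: $s+h$ need not lie in $S$, and the sum lands in $T$ only because either $s+h\in S$ (giving an element of $B$) or $s+h\in\mathbb{N}\setminus S$ with $(s+h)-h=s\in S$, so that condition (C3) forces $s+h\in H$ (giving an element of $C$). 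Indeed (C3) is used nowhere else in this direction, and without it closure genuinely fails: for $S=\langle 4,5,11\rangle$, $m=5$ and $H=\{3\}$ (which violates (C3) since $7\notin H$ while $7-3=4\in S$), one has $8,11\in S(5,\{3\})$ but $8+11=19=2\cdot 7+5\notin S(5,\{3\})$. Once this case is added, the proof is complete.
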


\begin{example}
If $S = \langle 2, 3 \rangle$, then the only upper $m-$set of $S$ is $H=\{1\}$ for every odd integer number $m$ greater than one. Therefore, by Theorem \ref{Th24}, 
\begin{align*}
\mathcal{D}_2(S) & = \left\{ \langle 4,6,4+2n+1,6+2n+1,2+ 2n+1 \rangle \mid n \in \mathbb{N} \setminus \{0\} \right\} \\  & = \left\{ \langle 4,6,2n+3,2n+5 \rangle \mid n \in \mathbb{N} \setminus \{0\} \right\}.
\end{align*}
\end{example}

We finish this section by recalling \cite[Corollary 8]{dobles} which states that $S(m_1,H_1) = S(m_2,H_2)$ if and only if $m_1 = m_2$ and $H_1 = H_2$, giving rise to a new proof of the infinity cardinality of $\mathcal{D}_2(S)$ (for more details see \cite[Corollary 14]{dobles}).

\section{The elements of an arithmetic variety with bounded Frobenius number}

If $\mathscr{A}$ is an arithmetic variety and $F$ is a positive integer, we define 
\[\mathscr{A}_F := \{S \in \mathscr{A} \mid \operatorname{F}(S) \leq F\}.\]

\begin{proposition}\label{Prop30}
If $\mathscr{A}$ is a arithmetic variety and $F$ is a positive integer, then $\mathscr{A}_F$ is a finite arithmetic variety.
\end{proposition}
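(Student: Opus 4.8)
The plan is to show $\mathscr{A}_F$ is both finite and an arithmetic variety, with finiteness being the essential point from which everything else follows easily.

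\textbf{Finiteness.} First I would observe that any numerical semigroup $S$ with $\operatorname{F}(S) \leq F$ satisfies $\{F+1, F+2, \ldots\} \subseteq S$, so $S$ is completely determined by the finite set $S \cap \{0, 1, \ldots, F\}$. Hence the collection of \emph{all} numerical semigroups with Frobenius number at most $F$ is finite (it injects into the power set of $\{0, \ldots, F\}$), together with $\mathbb{N}$ itself if one wishes to include it. Since $\mathscr{A}_F$ is a subset of this finite collection, $\mathscr{A}_F$ is finite. (Note $\mathbb{N}$ is handled separately since $\operatorname{F}(\mathbb{N}) = -\infty$; depending on the convention, either $\mathbb{N} \notin \mathscr{A}_F$ or it is adjoined, and in the latter case finiteness is unaffected.)

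\textbf{Closure under intersection.} If $\{S, T\} \subseteq \mathscr{A}_F$, then $S \cap T \in \mathscr{A}$ because $\mathscr{A}$ is an arithmetic variety, and $\operatorname{F}(S \cap T) = \max(\operatorname{F}(S), \operatorname{F}(T)) \leq F$ since the gaps of $S \cap T$ are exactly the union of the gaps of $S$ and of $T$. Hence $S \cap T \in \mathscr{A}_F$.

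\textbf{Closure under arithmetic extension.} This is the step I expect to require the most care. Suppose $S \in \mathscr{A}_F$ and $T$ is an arithmetic extension of $S$. By Proposition \ref{Prop1} it suffices to treat $T = \frac{S}{d}$ for a single $d \in \mathbb{N} \setminus \{0\}$, since a general arithmetic extension is a finite intersection of such quotients and intersections are already handled. Now $S \subseteq \frac{S}{d}$ always holds, so $\mathbb{N} \setminus \frac{S}{d} \subseteq \mathbb{N} \setminus S$, whence $\operatorname{F}\!\left(\frac{S}{d}\right) \leq \operatorname{F}(S) \leq F$ (using the convention that $\operatorname{F}(\mathbb{N}) \leq F$ as well, or simply noting $\frac{S}{d} \in \mathscr{A}_F$ trivially when $d \in S$). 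Combined with $\frac{S}{d} \in \mathscr{A}$, this gives $\frac{S}{d} \in \mathscr{A}_F$. By Proposition \ref{Prop1}, $\mathscr{A}_F$ is an arithmetic variety, and being finite, it is a finite arithmetic variety.

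The only genuine subtlety is the bookkeeping around $\mathbb{N}$ and the Frobenius number convention; every other step reduces immediately to the monotonicity $S \subseteq T \Rightarrow \operatorname{F}(T) \leq \operatorname{F}(S)$ and the description of gaps of an intersection, both of which are elementary.
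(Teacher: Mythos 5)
Your proof is correct and follows essentially the same route as the paper: finiteness because every $S \in \mathscr{A}_F$ contains $\{F+1, F+2, \ldots\}$ and is determined by $S \cap \{0,\ldots,F\}$, closure under intersection via $\operatorname{F}(S \cap T) = \max(\operatorname{F}(S),\operatorname{F}(T))$, and closure under quotients via $\operatorname{F}(S/d) \leq \operatorname{F}(S)$ together with Proposition \ref{Prop1}. The only difference is your hedging about the convention for $\operatorname{F}(\mathbb{N})$, which is unnecessary here since the paper's definition $\operatorname{F}(S) = \max(\mathbb{Z}\setminus S)$ gives $\operatorname{F}(\mathbb{N}) = -1$, so $\mathbb{N} \in \mathscr{A}_F$ automatically.
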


\begin{proof}
Since $\operatorname{F}(\mathbb{N}) = -1,$ we have that $\mathbb{N} \in \mathscr{A}_F$; then, $\mathscr{A}_F \neq \varnothing$. Moreover, if $S \in \mathscr{A}_F$, then $\{0,F+1,F+2, \ldots \} \subseteq S$ and, consequently, $\mathscr{A}_F $ is a finite set. Now, let us use Proposition \ref{Prop1} to show that $\mathscr{A}_F$ is an arithmetic variety. On the one hand, if $\{S,T\} \subseteq \mathscr{A}_F$, then $S \cap T \in \mathscr{A}$ and, since $\operatorname{F}(S \cap T) = \max\{\operatorname{F}(S), \operatorname{F}(T)\}$, we conclude that $S \cap T \in \mathscr{A}_F$. On the other hand, if $S \in \mathscr{A}_F$ and $d \in \mathbb{N} \setminus \{0\}$, then $\frac{S}d \in \mathscr{A}_F$; thus, $\operatorname{F}\left(\frac{S}d\right) \leq \operatorname{F}(S) \leq F$ and, consequently, $\frac{S}d \in \mathscr{A}_F$.
\end{proof}

Notice that  $\mathcal{G}_{\mathscr{A}_F}$ is now finite, because it has finitely many nodes. In particular the set, $\mathcal{D}_2(S) \cap \mathscr{A}_F$, of children of $S \in \mathscr{A}_F$ in $\mathcal{G}_{\mathscr{A}_F}$ is now finite. Furthermore, we have the following.

\begin{proposition}\label{Prop31}
Let $\mathscr{A}$ be a arithmetic variety and let $F$ be a positive integer. If $S \in \mathscr{A}_F$, then $\mathcal{D}_2(S) \cap \mathscr{A}_F = \{T \in \mathcal{D}_2(S) \mid \operatorname{F}(T) \leq F\} \cap \mathscr{A}$.
\end{proposition}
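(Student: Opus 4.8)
The plan is to deduce the equality by a direct manipulation of the defining conditions; essentially no hypothesis beyond the definitions is needed, and $S \in \mathscr{A}_F$ is present only so that the left-hand side is the set of children of $S$ in the finite tree $\mathcal{G}_{\mathscr{A}_F}$.

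First I would record the tautology $\mathscr{A}_F = \mathscr{A} \cap \{T \in \mathscr{L} \mid \operatorname{F}(T) \leq F\}$, which is immediate from the definition of $\mathscr{A}_F$. Intersecting with $\mathcal{D}_2(S)$ and using commutativity and associativity of $\cap$ then gives
\[
\mathcal{D}_2(S) \cap \mathscr{A}_F = \bigl(\mathcal{D}_2(S) \cap \{T \in \mathscr{L} \mid \operatorname{F}(T) \leq F\}\bigr) \cap \mathscr{A} = \{T \in \mathcal{D}_2(S) \mid \operatorname{F}(T) \leq F\} \cap \mathscr{A},
\]
which is exactly the asserted identity. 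Equivalently, one argues by double inclusion: $T \in \mathcal{D}_2(S) \cap \mathscr{A}_F$ unwinds to ``$T \in \mathcal{D}_2(S)$, $T \in \mathscr{A}$ and $\operatorname{F}(T) \leq F$'', and this is the same conjunction that describes membership in $\{T \in \mathcal{D}_2(S) \mid \operatorname{F}(T) \leq F\} \cap \mathscr{A}$.

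Since the argument is purely formal, there is no real obstacle; the only point worth flagging is that the hypothesis $S \in \mathscr{A}_F$ is not actually used. Indeed, if $S \notin \mathscr{A}$ then both sides are empty, because any $T \in \mathcal{D}_2(S) \cap \mathscr{A}$ would force $S = \frac{T}{2} \in \mathscr{A}$ by Proposition \ref{Prop1}; and if $\operatorname{F}(S) > F$ then both sides are again empty, because $T \subseteq \frac{T}{2} = S$ for every $T \in \mathcal{D}_2(S)$, whence $\operatorname{F}(T) \geq \operatorname{F}(S) > F$. The practical value of the statement is that, combined with Theorem \ref{Th24}, it reduces the computation of $\mathcal{D}_2(S) \cap \mathscr{A}_F$ to enumerating the finitely many $S(m,H)$ with $\operatorname{F}(S(m,H)) \leq F$ and keeping those that lie in $\mathscr{A}$, which is what Algorithm \ref{Alg37} will exploit.
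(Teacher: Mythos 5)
Your argument is correct and coincides with the paper's own proof, which is the same double-inclusion unwinding of the definition of $\mathscr{A}_F$. The extra observations (that the hypothesis $S \in \mathscr{A}_F$ is not logically needed, and why both sides are empty otherwise) are accurate but not part of the paper's argument.
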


\begin{proof}
If $T \in \mathcal{D}_2(S) \cap \mathscr{A}_F$, then $T \in \mathcal{D}_2(S), \operatorname{F}(T) \leq F$ and $T \in \mathscr{A}$. Conversely, if $T \in \mathcal{D}_2(S), \operatorname{F}(T) \leq F$ and $T \in \mathscr{A}$, then $T \in \mathcal{D}_2(S) \cap \mathscr{A}_F$.
\end{proof}

In view of the previous result, an algorithm is obtained for the computation of $\mathcal{G}_{\mathscr{A}_F}$ provided that we can compute $\{T \in \mathcal{D}_2(S ) \mid \operatorname{F}(T) \leq F\}$ for a given $S \in \mathscr{A}_F$. The rest of the section is devoted to this purpose.

The following result follows from \cite[Proposition 9]{dobles}.

\begin{proposition}\label{Prop32}
Let $S \neq \mathbb{N}$ be a numerical semigroup. If $m$ is an odd element of $S$ and $H$ is an upper $m-$set of $S$, then
\[
\operatorname{F}(S(m,H)) = \left\{\begin{array}{ll}
\max(2 \operatorname{F}(S), m-2) & \text{if}\ H = \mathbb{N} \setminus S; \\
\max(2 \operatorname{F}(S), 2 \max(\mathbb{N} \setminus S \cup H) +m)) &  \text{if}\ H \neq \mathbb{N} \setminus S
\end{array}\right.
\]
\end{proposition}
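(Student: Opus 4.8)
The plan is to read off $\operatorname{F}(S(m,H))$ directly from the set–theoretic description in Notation~\ref{Not25}, by splitting $\mathbb{Z}$ into even and odd integers, locating the largest integer missing from $S(m,H)$ in each parity class, and taking the larger of the two values.

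First I would settle the even integers. Since $m$ is odd, each of $2s+m$ ($s\in S$) and $2h+m$ ($h\in H$) is odd, so the set of even elements of $S(m,H)$ is exactly $\{2s \mid s \in S\}$. Hence an even integer $2k$ belongs to $S(m,H)$ if and only if $k \in S$, and the largest even non-element is $2\operatorname{F}(S)$; note that $S \neq \mathbb{N}$ forces $1 \notin S$, so $\operatorname{F}(S) \geq 1$ and this value is a genuine positive gap, and likewise $m \geq 3$. For the odd integers I would note that, $m$ being odd, every odd integer is uniquely of the form $2k+m$ with $k \in \mathbb{Z}$, and $2k+m \in S(m,H)$ precisely when $k \in S \cup H$ (in particular $k \geq 0$); equivalently, the odd members of $S(m,H)$ are $\{2k+m \mid k \in S \cup H\}$, so an odd integer $2k+m$ is missing from $S(m,H)$ exactly when $k < 0$ or $k \in \mathbb{N}\setminus(S\cup H)$. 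Since $H \subseteq \mathbb{N}\setminus S$, one has $\mathbb{N}\setminus(S\cup H) = (\mathbb{N}\setminus S)\setminus H$.

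Now I would split into the two cases of the statement. If $H = \mathbb{N}\setminus S$, then $S\cup H = \mathbb{N}$, so the odd non-elements are exactly the $2k+m$ with $k\leq -1$; the largest of these is $m-2$, whence $\operatorname{F}(S(m,H)) = \max(2\operatorname{F}(S), m-2)$. If $H \neq \mathbb{N}\setminus S$, then $\mathbb{N}\setminus(S\cup H)$ is a nonempty finite set; writing $g$ for its maximum, every odd integer $2k+m$ with $k > g$ lies in $S(m,H)$ (as then $k \in S\cup H$) while $2g+m$ does not, so the largest odd non-element is $2g+m = 2\max(\mathbb{N}\setminus(S\cup H)) + m$; since $g \geq 0$ this already exceeds $m-2$, so the odd non-elements with negative $k$ are irrelevant, and $\operatorname{F}(S(m,H)) = \max\bigl(2\operatorname{F}(S),\, 2\max(\mathbb{N}\setminus(S\cup H)) + m\bigr)$. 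In each case, taking the maximum of the largest even gap $2\operatorname{F}(S)$ and the largest odd gap gives the stated formula; as all candidate values are positive, negative integers play no role.

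The computation is essentially bookkeeping with parities, and I expect the only delicate points to be the small-value edge cases — checking that $m \geq 3$ and $\operatorname{F}(S) \geq 1$ when $S \neq \mathbb{N}$, so that $m-2$ and $2\operatorname{F}(S)$ are honest non-negative gaps, and verifying that in the second case the bound $2g+m$ dominates the odd gaps coming from the range below $m$. I would also note that the hypothesis that $H$ is an \emph{upper $m$-set} is not actually used here (it is what guarantees that $S(m,H)$ is a numerical semigroup with $S = \tfrac{S(m,H)}{2}$); alternatively, the formula can be obtained by specializing \cite[Proposition 9]{dobles} and rewriting its conclusion in the two cases $H = \mathbb{N}\setminus S$ and $H \neq \mathbb{N}\setminus S$.
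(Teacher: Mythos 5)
Your computation is correct, and it is genuinely more self-contained than what the paper does: the paper offers no argument at all, simply asserting that the statement ``follows from \cite[Proposition 9]{dobles}'', whereas you derive the formula directly from the description of $S(m,H)$ in Notation~\ref{Not25} by splitting integers by parity (even non-elements are exactly the $2k$ with $k\notin S$, largest $2\operatorname{F}(S)$; odd non-elements are the $2k+m$ with $k<0$ or $k\in\mathbb{N}\setminus(S\cup H)$, largest $m-2$ or $2\max\bigl(\mathbb{N}\setminus(S\cup H)\bigr)+m$ according to whether $H=\mathbb{N}\setminus S$ or not). The case analysis and the edge checks ($m\geq 3$, $\operatorname{F}(S)\geq 1$, $2g+m>m-2$ when $g\geq 0$) are all sound, and your observation that the upper-$m$-set hypothesis is only needed so that $S(m,H)$ is a numerical semigroup (hence so that $\operatorname{F}$ is meaningful), not for the parity bookkeeping itself, is accurate. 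Your reading of the paper's ambiguous expression $\max(\mathbb{N}\setminus S\cup H)$ as $\max\bigl(\mathbb{N}\setminus(S\cup H)\bigr)$ is the intended one (it is the only reading under which the formula depends on $H$, and it matches the filter \texttt{Maximum(Difference(gaps,H))} in the paper's GAP code). What your route buys is a transparent, citation-free proof that also disambiguates the statement; what the paper's route buys is brevity and consistency with the source \cite{dobles} from which Theorem~\ref{Th24} is already quoted.
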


\begin{lemma}\label{Lema33}
Let $S \neq \mathbb{N}$ be a numerical semigroup and let $m$ be an odd integer greater that $\operatorname{F}(S)$. If $H \subseteq \mathbb{N} \setminus S$ satisfies condition (C3) in Definition \ref{Def_ums}, then $H$ is an upper $m-$set of $S$.
\end{lemma}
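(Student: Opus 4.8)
The plan is to verify the two remaining conditions (C1) and (C2) of Definition \ref{Def_ums} directly, using the hypothesis that $m$ is odd with $m > \operatorname{F}(S)$; condition (C3) is assumed. The key observation is that $m > \operatorname{F}(S)$ forces every integer $\geq m$ that exceeds the Frobenius number to lie in $S$, and more to the point, any sum of the form (element of $S$) $+ m$ already lands well above $\operatorname{F}(S)$.

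First I would check (C1): take $h \in H$. Since $H \subseteq \mathbb{N}\setminus S$, we have $h \geq 0$, hence $h + m \geq m > \operatorname{F}(S)$, and therefore $h + m \in S$ because every integer strictly greater than $\operatorname{F}(S)$ belongs to $S$. This gives $\{h + m \mid h \in H\} \subseteq S$. Next, for (C2), take $h_1, h_2 \in H$; then $h_1 + h_2 + m \geq m > \operatorname{F}(S)$ by the same reasoning (both $h_i$ are non-negative), so $h_1 + h_2 + m \in S$, and thus $\{h_1 + h_2 + m \mid h_1, h_2 \in H\} \subseteq S$. Together with the assumed (C3), the set $H$ satisfies all three defining conditions, so $H$ is an upper $m$-set of $S$.

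There is essentially no obstacle here: the only thing to be slightly careful about is the convention $\operatorname{F}(S) = \max(\mathbb{Z}\setminus S)$, so that $n > \operatorname{F}(S)$ indeed implies $n \in S$ for $n \in \mathbb{N}$ (and we should note $m \in \mathbb{N}$, since $m$ is an odd element of $S$ in the intended application, but for the statement as written it suffices that $m$ is a positive odd integer, which is implicit in "odd integer greater than $\operatorname{F}(S)$" together with $\operatorname{F}(S) \geq -1$). One might also remark that the hypothesis is genuinely used only through the single inequality $m > \operatorname{F}(S)$, which is what makes (C1) and (C2) automatic regardless of the internal structure of $H$; this is precisely why the lemma is useful in the next section, where it reduces the search for upper $m$-sets (for large $m$) to the combinatorial condition (C3) alone.
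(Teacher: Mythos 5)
Your proof is correct and is essentially the argument the paper gives: conditions (C1) and (C2) hold automatically because $h + m \geq m > \operatorname{F}(S)$ and $h_1 + h_2 + m \geq m > \operatorname{F}(S)$ place these sums in $S$, so only (C3) needs to be assumed. The remarks about conventions are fine but not needed.
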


\begin{proof}
It suffice to observe that if $m \geq \operatorname{F}(S),$ then $\{h + m \mid h \in H\} \subseteq S$ and $\{h_1 + h_2 + m \mid h_1, h_2 \in H\} \subseteq S$.
\end{proof}

\begin{proposition}\label{Prop34}
Let $S \neq \mathbb{N}$ be a numerical semigroup and let $m$ be an odd element of $S$. Then $\mathbb{N} \setminus S$ is an upper $m-$set of $S$ if and only if $m > \operatorname{F}(S)$.
\end{proposition}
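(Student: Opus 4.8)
The plan is to prove the two implications separately, using Proposition~\ref{Prop32} for the forward direction and Lemma~\ref{Lema33} for the backward direction. Note first that for $\mathbb{N} \setminus S$ to even be a candidate for an upper $m$-set we must check it satisfies (C1)--(C3); condition (C3) is immediate since the implication ``$h \in \mathbb{N}\setminus S$ and $x \in \mathbb{N}\setminus S$ with $x - h \in S$'' trivially gives $x \in \mathbb{N}\setminus S = H$. So the content of being an upper $m$-set lies entirely in (C1) and (C2) when $H = \mathbb{N}\setminus S$.

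For the ``if'' direction, suppose $m > \operatorname{F}(S)$. Then for every $h \in \mathbb{N}\setminus S$ we have $h \le \operatorname{F}(S) < m$, so $h + m > \operatorname{F}(S)$, forcing $h + m \in S$; this is (C1). Similarly $h_1 + h_2 + m \ge m > \operatorname{F}(S)$ gives $h_1 + h_2 + m \in S$, which is (C2). Together with the trivial (C3), this shows $\mathbb{N}\setminus S$ is an upper $m$-set of $S$. (In fact this is exactly the observation already recorded in Lemma~\ref{Lema33}, applied with $H = \mathbb{N}\setminus S$.)

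For the ``only if'' direction, suppose $\mathbb{N}\setminus S$ is an upper $m$-set of $S$; I want to rule out $m \le \operatorname{F}(S)$. Since $m$ is an odd element of $S$ and $S \ne \mathbb{N}$, we have $m \ge \operatorname{m}(S) \ge 2$, so $m$ is at least $3$ when $m \le \operatorname{F}(S)$ is in question. Let $h = \operatorname{F}(S) \in \mathbb{N}\setminus S$. If $m \le \operatorname{F}(S)$, I claim condition (C1) fails, which would contradict the hypothesis. The cleanest route is to exhibit an element of $\mathbb{N}\setminus S$ whose sum with $m$ still lies in $\mathbb{N}\setminus S$: indeed, consider $\operatorname{F}(S)$ itself. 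We need $\operatorname{F}(S) - m$: hmm, this requires care, so instead I would argue via Proposition~\ref{Prop32}. If $\mathbb{N}\setminus S$ is an upper $m$-set, then $S(m, \mathbb{N}\setminus S) \in \mathcal{D}_2(S)$ by Theorem~\ref{Th24}, and by Proposition~\ref{Prop32} its Frobenius number is $\max(2\operatorname{F}(S), m-2)$. But an alternative and more direct approach: note that $h + m \in S$ for all gaps $h$ is equivalent to saying that $m$ exceeds the largest gap, i.e.\ that no gap $h$ has $h + m$ again a gap. The largest gap is $\operatorname{F}(S)$, and $\operatorname{F}(S) + m \ge \operatorname{F}(S) + \operatorname{m}(S) > \operatorname{F}(S)$ is always in $S$, so that particular instance is not the obstruction; the real obstruction is when some gap $h$ has $h + m \le \operatorname{F}(S)$ and $h + m \notin S$. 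Thus the right claim is: if $m \le \operatorname{F}(S)$, then taking $h = \operatorname{F}(S) - m$ — which need not be a gap — fails. So I would instead prove the contrapositive directly: if $m \le \operatorname{F}(S)$, then $\operatorname{F}(S) - m \in \mathbb{N}$, and I split into cases according to whether $\operatorname{F}(S) - m \in S$ or not. If $\operatorname{F}(S) - m \notin S$, then $h := \operatorname{F}(S) - m$ is a gap with $h + m = \operatorname{F}(S) \notin S$, so (C1) fails. If $\operatorname{F}(S) - m \in S$, I need a different witness; here one can use that $S \ne \mathbb{N}$ so $1 \notin S$, and examine the gap $h$ with $h + m = \operatorname{F}(S)$ more carefully, or iterate: the set of gaps is nonempty and bounded, so some gap $h_0$ has $h_0 + m$ also a gap precisely when $m$ does not exceed $\operatorname{F}(S)$ — this is a small combinatorial fact about the gap set that I would verify by taking $h_0$ to be the largest gap with $h_0 + m \le \operatorname{F}(S)$ and noting $h_0 + m$ is again a gap by maximality.

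The main obstacle is the ``only if'' direction: producing a concrete gap $h$ with $h + m \notin S$ when $m \le \operatorname{F}(S)$. The subtle point is that $\operatorname{F}(S) - m$ itself may lie in $S$, so one cannot always use it as the witness directly; the fix is to choose $h$ to be the largest gap not exceeding $\operatorname{F}(S) - m$ and argue that $h + m$ is still a gap by the maximality of $h$ among such gaps together with the fact that $\operatorname{F}(S) \notin S$. Once that witness is in hand, condition (C1) is violated, contradicting the assumption that $\mathbb{N}\setminus S$ is an upper $m$-set, which completes the proof.
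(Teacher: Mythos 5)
Your ``if'' direction is correct and is essentially the paper's: condition (C3) of Definition~\ref{Def_ums} is vacuous for $H=\mathbb{N}\setminus S$, and $m>\operatorname{F}(S)$ forces (C1) and (C2), which is exactly Lemma~\ref{Lema33} applied to this $H$.

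The ``only if'' direction is where you diverge from the paper and where your argument has a gap. The paper argues directly: since $\{1,\dots,\operatorname{m}(S)-1\}\subseteq\mathbb{N}\setminus S$, condition (C1) puts $m+1,\dots,m+\operatorname{m}(S)-1$ in $S$; together with $m\in S$ this gives $\operatorname{m}(S)$ consecutive elements of $S$ starting at $m$, so every integer $\geq m$ lies in $S$ and hence $m>\operatorname{F}(S)$. You instead argue by contraposition, seeking a gap $h$ with $h+m\notin S$ when $m\leq\operatorname{F}(S)$. Your first case ($\operatorname{F}(S)-m\notin S$, witness $h=\operatorname{F}(S)-m$) is fine, but your treatment of the second case ($\operatorname{F}(S)-m\in S$) does not work as stated: maximality of $h_0$ among gaps with $h_0+m\leq\operatorname{F}(S)$ gives no information about whether $h_0+m$ belongs to $S$, so ``$h_0+m$ is again a gap by maximality'' is an assertion, not an argument. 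The observation you are missing is that the second case is empty: $m$ is an \emph{element} of $S$, so if $\operatorname{F}(S)-m$ were in $S$ then $\operatorname{F}(S)=(\operatorname{F}(S)-m)+m\in S$, which is absurd. (Also note $\operatorname{F}(S)-m\geq 1$, since $m\in S$ and $\operatorname{F}(S)\notin S$ rule out $m=\operatorname{F}(S)$.) With that one line, your first case always applies, (C1) fails at $h=\operatorname{F}(S)-m$, and the proof closes; as written, however, the case you could not handle is exactly the one you needed to recognize as impossible.
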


\begin{proof}
If $\mathbb{N} \setminus S$ is an upper $m-$set of $S$, then $\{g + m \mid g \in \mathbb{N} \setminus S\} \subseteq S$. Now, since $\{1, \ldots, \operatorname{m}(S)-1\} \subseteq \mathbb{N} \setminus S,$ then $\{m, m+1, \ldots, m + \operatorname{m(S)} -1, m + \operatorname{m(S)}, \ldots \} \subseteq S$ and, therefore $m > \operatorname{F}(S)$. Conversely, if  $m > \operatorname{F}(S)$, then $\mathbb{N} \setminus S$ satisfies condition (C3) in Definition \ref{Def_ums}. Thus, by Lemma \ref{Lema33}, we conclude that $\mathbb{N} \setminus S$ is an upper $m-$set of $S$.
\end{proof}

Combining Theorem \ref{Th24} and Proposition \ref{Prop32}, we obtain the following immediate result.

\begin{lemma}\label{Lema35}
Let $S$ be a numerical semigroup and let $F$ be a positive number. If $2 \operatorname{F}(S) > F$, then $\{ T \in \mathcal{D}_2(S) \mid \operatorname{F}(T) \leq F\} = \varnothing$.
\end{lemma}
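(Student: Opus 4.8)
The plan is to derive everything from the explicit description of $\mathcal{D}_2(S)$ in Theorem \ref{Th24} together with the Frobenius formula in Proposition \ref{Prop32}.

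First I would dispose of the degenerate case $S = \mathbb{N}$. Since $F$ is a positive integer and $2\operatorname{F}(S) > F$, we must have $\operatorname{F}(S) \geq 1 > -1 = \operatorname{F}(\mathbb{N})$; in particular $\mathbb{N} \setminus S \neq \varnothing$, so $S \subsetneq \mathbb{N}$ and Theorem \ref{Th24} is applicable to $S$.

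Next, let $T$ be an arbitrary element of $\mathcal{D}_2(S)$. By Theorem \ref{Th24} there are an odd element $m$ of $S$ and an upper $m-$set $H$ of $S$ with $T = S(m,H)$. In either of the two cases of Proposition \ref{Prop32}, the value $\operatorname{F}(T) = \operatorname{F}(S(m,H))$ is expressed as the maximum of $2\operatorname{F}(S)$ and a second quantity, so in particular $\operatorname{F}(T) \geq 2\operatorname{F}(S)$. Combining this with the hypothesis gives $\operatorname{F}(T) \geq 2\operatorname{F}(S) > F$, hence $T \notin \{T \in \mathcal{D}_2(S) \mid \operatorname{F}(T) \leq F\}$. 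Since $T$ was arbitrary, that set is empty, which is the assertion.

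I do not expect a genuine obstacle here: the only point needing a moment's care is that Theorem \ref{Th24} is stated for $S \neq \mathbb{N}$, and the first step above is exactly what guarantees this. Everything else is immediate from the observation that $2\operatorname{F}(S)$ occurs inside the maximum in Proposition \ref{Prop32} regardless of whether $H = \mathbb{N} \setminus S$ or $H \neq \mathbb{N} \setminus S$.
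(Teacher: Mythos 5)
Your argument is exactly the one the paper intends: the lemma is stated as an immediate consequence of Theorem \ref{Th24} and Proposition \ref{Prop32}, since $2\operatorname{F}(S)$ appears inside the maximum in both cases of Proposition \ref{Prop32}, forcing $\operatorname{F}(T) \geq 2\operatorname{F}(S) > F$ for every $T \in \mathcal{D}_2(S)$. Your preliminary check that the hypothesis rules out $S = \mathbb{N}$ (so that Theorem \ref{Th24} applies) is a small point the paper leaves implicit, and it is handled correctly.
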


\begin{theorem}\label{Th36}
Let $S$ be a numerical semigroup and let $F$ be a positive number. If $2 \operatorname{F}(S) \leq F$, then $\{T \in \mathcal{D}_2(S ) \mid \operatorname{F}(T) \leq F\}$ is equal to the union of
\[\left\{S(m,\mathbb{N}\setminus S) \mid m\ \text{is an odd element of}\ \{\operatorname{F}(S)+1, \ldots, F+2\}\right\}\]
and
\[\left\{ S(m,H) \left\vert \begin{array}{l} m\ \text{is an odd element of}\ S\ \text{and}\\ H \neq \mathbb{N} \setminus S\ \text{is an upper $m-$set}\\ \text{with}\ 2 \max(\mathbb{N} \setminus S \cup H) + m \leq F \end{array}\right. \right\}.\]
\end{theorem}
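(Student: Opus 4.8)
The plan is to exploit the bijective parametrization of $\mathcal{D}_2(S)$ provided by Theorem~\ref{Th24}: every element of $\mathcal{D}_2(S)$ has the form $S(m,H)$ with $m$ an odd element of $S$ and $H$ an upper $m$-set of $S$. On top of this I would split into the two cases appearing in the Frobenius formula of Proposition~\ref{Prop32}, namely $H = \mathbb{N}\setminus S$ and $H \neq \mathbb{N}\setminus S$, and in each case read off from that formula exactly when $\operatorname{F}(S(m,H)) \leq F$, using the hypothesis $2\operatorname{F}(S) \leq F$ to make the maximum collapse. (We may assume $S \neq \mathbb{N}$, since Theorem~\ref{Th24} and Proposition~\ref{Prop32} are stated in that generality; the case $S = \mathbb{N}$ follows at once from $\mathcal{D}_2(\mathbb{N}) = \{\langle 2, 2n+1 \rangle \mid n \in \mathbb{N}\}$.)

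First I would check the inclusion $\supseteq$. If $m$ is an odd element of $\{\operatorname{F}(S)+1,\ldots,F+2\}$, then $m > \operatorname{F}(S)$ forces $m \in S$, and Proposition~\ref{Prop34} gives that $\mathbb{N}\setminus S$ is an upper $m$-set of $S$; hence $S(m,\mathbb{N}\setminus S) \in \mathcal{D}_2(S)$ by Theorem~\ref{Th24}, and by Proposition~\ref{Prop32} its Frobenius number is $\max(2\operatorname{F}(S), m-2) \leq \max(F, F) = F$, since $m \leq F+2$. If instead $(m,H)$ is as in the second family, then $S(m,H) \in \mathcal{D}_2(S)$ by Theorem~\ref{Th24}, and Proposition~\ref{Prop32} gives $\operatorname{F}(S(m,H)) = \max(2\operatorname{F}(S), 2\max(\mathbb{N}\setminus S \cup H)+m) \leq F$ by the hypothesis $2\operatorname{F}(S) \leq F$ together with the constraint $2\max(\mathbb{N}\setminus S \cup H)+m \leq F$.

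For the reverse inclusion, I would take $T \in \mathcal{D}_2(S)$ with $\operatorname{F}(T) \leq F$ and write $T = S(m,H)$ by Theorem~\ref{Th24}. If $H = \mathbb{N}\setminus S$, Proposition~\ref{Prop34} yields $m \geq \operatorname{F}(S)+1$, while $\operatorname{F}(T) = \max(2\operatorname{F}(S), m-2) \leq F$ yields $m \leq F+2$, so $T$ belongs to the first family. If $H \neq \mathbb{N}\setminus S$, then $\operatorname{F}(T) = \max(2\operatorname{F}(S), 2\max(\mathbb{N}\setminus S\cup H)+m) \leq F$ gives $2\max(\mathbb{N}\setminus S\cup H)+m \leq F$, so $T$ belongs to the second family.

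There is no genuine obstacle here; the proof is a short case analysis glued together from Theorem~\ref{Th24} and Propositions~\ref{Prop32} and~\ref{Prop34}. The one point demanding a little care is the case $H = \mathbb{N}\setminus S$: one must recognize that ``$m$ odd, $m \in S$, $\mathbb{N}\setminus S$ an upper $m$-set, and $m-2\leq F$'' is equivalent to ``$m$ an odd element of $\{\operatorname{F}(S)+1,\ldots,F+2\}$'', which is exactly where Proposition~\ref{Prop34} is used, and one must invoke $2\operatorname{F}(S)\leq F$ to discard the term $2\operatorname{F}(S)$ from the maximum in Proposition~\ref{Prop32} (when $2\operatorname{F}(S) > F$ the set is empty by Lemma~\ref{Lema35}, and in general keeping that term would force the first family to be enlarged).
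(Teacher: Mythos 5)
Your proposal is correct and follows essentially the same route as the paper: parametrize $\mathcal{D}_2(S)$ via Theorem~\ref{Th24}, split according to whether $H = \mathbb{N}\setminus S$, and apply Propositions~\ref{Prop32} and~\ref{Prop34} to translate $\operatorname{F}(S(m,H)) \leq F$ into the stated conditions on $m$ and $H$. The only difference is that you spell out both inclusions explicitly, whereas the paper only writes out the forward direction; this is a matter of exposition, not substance.
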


\begin{proof}
If $T \in \mathcal{D}_2(S)$ and $\operatorname{F}(T) \leq F$, then, by Theorem \ref{Th24}, we have that $T = S(m,H)$ for some odd integer $m \in S$ and some upper $m-$set, $H$, of $S$. We distinguish two cases, depending on which $H$ is chosen:
\begin{itemize}
    \item If $H = \mathbb{N} \setminus S$, then, by Proposition \ref{Prop34}, we have that $m > \operatorname{F}(S)$ and, by Proposition \ref{Prop32}, that $m - 2 \leq F$. Therefore, we conclude that $m$ must belong to $\{\operatorname{F}(S)+1, \ldots, F+2\}$.
    \item If $H \neq \mathbb{N} \setminus S$, then, by Proposition \ref{Prop32}, we have that $2 \max(\mathbb{N} \setminus S \cup H) + m \leq F$.
\end{itemize}
This completes the proof.
\end{proof}

Now we can formulate the algorithm to compute $\{T \in \mathcal{D}_2(S) \mid \operatorname{F}(T) \leq F\}$ for given $S \in \mathscr{L}$ and $F \in \mathbb{N} \setminus \{0\}$.

\begin{algorithm}\label{Alg37}
Computation of $\{T \in \mathcal{D}_2(S) \mid \operatorname{F}(T) \leq F\}$.

\textsc{Input:} A numerical semigroup $S$ and a positive integer $F$.

\textsc{Output:} $\{T \in \mathcal{D}_2(S) \mid \operatorname{F}(T) \leq F\}$.

\begin{enumerate}
    \item If $2 \operatorname{F}(S) > F$, then return $\varnothing$.
    \item Set $A = \{m \in \mathbb{N} \mid m\ \text{is odd and}\ \operatorname{F}(S) + 1 \leq m \leq F+2\}$.
    \item Set $B = \{m \in S \mid m\ \text{is odd and}\ m \leq F-2\}$.
    \item For each $m \in B$ define \[H(m)=\left\{ H \left\vert \begin{array}{l} H \neq \mathbb{N} \setminus S\ \text{is an upper $m-$set}\\ \text{such that} \max(\mathbb{N} \setminus S \cup H) \leq \frac{F-m}2 \end{array}\right. \right\}.\]
    \item Return $\{S(m,\mathbb{N} \setminus S) \mid m \in A\} \cup \{S(m,H) \mid m \in B\ \text{and}\ H \in H(m)\}$.
\end{enumerate}   
\end{algorithm}

The following GAP function implements Algorithm \ref{Alg37}. It requieres both the package \texttt{NumericalSgps} and the function \texttt{IsUppermSetOfNumericalSemigroup} declared in the prevoius section.

\begin{verbatim}
  Algorithm37:=function(S,F)
    local out,FS,gaps,A,B,pow,Hm,Aux;
    out:=[];
    FS:=FrobeniusNumber(S);
    if 2*FS > F then return(out); fi;
    gaps:=Gaps(S);
    A:=Filtered([(FS+1)..(F+2)],m->IsOddInt(m));
    out:=List(A,m->[m,gaps]);
    B:=Filtered([1..(F-2)],m->IsOddInt(m));
    B:=Filtered(B,m->BelongsToNumericalSemigroup(m,S));
    pow:=Combinations(gaps);
    pow:=Difference(pow,[gaps]);
    Hm:=[];
    for m in B do
      Aux:=Filtered(pow,H->IsUppermSetOfNumericalSemigroup(S,m,H));
      Hm:=Filtered(Aux,H->Maximum(Difference(gaps,H))<=(F-m)/2);
      Append(out,List(Hm,H->[m,H]));
    od;
    return(out);
  end;
\end{verbatim}

\begin{example}
Let $S = \langle 4,5,11 \rangle$ and $F = 15$. Using the GAP function above, we can verify, as follows, that $\{T \in \mathcal{D}_2(S) \mid \operatorname{F}(T) \leq 15\}$ is equal to
\[
\begin{array}{c}
\{S(9, \{1, 2, 3, 6, 7\}), S(11, \{1, 2, 3, 6, 7\}), S(13, \{1, 2, 3, 6, 7\}), S(15, \{1, 2, 3, 6, 7\}),\\ S(17, \{1, 2, 3, 6, 7\}), S(5, \{3, 6, 7\}), S(5, \{6, 7\}), S(9, \{1, 2, 6, 7\}), S(9, \{1, 3, 6, 7\}),\\ S(9, \{1, 6, 7\}), S(9, \{2, 3, 6, 7\}), S(9, \{2, 6, 7\}), S(9, \{3, 6, 7\}), S(9, \{6, 7\}),\\ S(11, \{1, 3, 6, 7\}), S(11, \{2, 3, 6, 7\}), S(11, \{3, 6, 7\}), S(13, \{2, 3, 6, 7\})\}. 
\end{array}
\]
Now, by recalling Notation \ref{Not25} and using the following GAP function
\begin{verbatim}
  UpperMSetToNumericalSemigroup:=function(S,m,H)
    local msg,T;
    msg:= MinimalGeneratingSystem(S);
    T:=NumericalSemigroup(Union(2*msg,[m],2*H+m));
    return(T);
  end;   
\end{verbatim}
we obtain that the above set is equal to
\[
\begin{array}{c}
\{
\langle 8, 9, 10, 11, 13, 15 \rangle, \langle 8, 10, 11, 13, 15, 17 \rangle, \langle 8, 10, 13, 15, 17, 19, 22 \rangle,\\ \langle 8, 10, 15, 17, 19, 21, 22 \rangle, \langle 8, 10, 17, 19, 21, 22, 23 \rangle, \langle 5, 8, 11, 17 \rangle, \langle 5, 8, 17, 19 \rangle,\\ \langle 8, 9, 10, 11, 13 \rangle, \langle 8, 9, 10, 11, 15 \rangle, \langle 8, 9, 10, 11, 23 \rangle, \langle 8, 9, 10, 13, 15 \rangle,\\ \langle 8, 9, 10, 13 \rangle, \langle 8, 9, 10, 15, 21, 22 \rangle, \langle 8, 9, 10, 21, 22, 23 \rangle, \langle 8, 10, 11, 13, 17 \rangle,\\ \langle 8, 10, 11, 15, 17 \rangle, \langle 8, 10, 11, 17, 23 \rangle, \langle 8, 10, 13, 17, 19, 22 \rangle
\}.
\end{array}
\]
\end{example}

\section{Numerical semigroups with given depth}

\begin{definition}
Let $S$ be a numerical semigroup. The \emph{depth} of $S$, denoted $\operatorname{depth}(S)$, is the integer number $q$ such that $\operatorname{F}(S)+1 = q \operatorname{m}(S) - r$ for some integer $0 \leq r < \operatorname{m}(S).$
\end{definition}

Observe that \[\operatorname{depth}(S) = \left\lceil \frac{\operatorname{F}(S)+1}{\operatorname{m}(S)} \right\rceil = \left\lfloor \frac{\operatorname{F}(S)}{\operatorname{m}(S)} \right\rfloor+1.\] Note that the depth of $S$ matches the so-called complexity of $S$ (see \cite[Corollary 21]{questiones}).

Given $q \in \mathbb{N}$, we write $\mathscr{C}_q$ for the set of numerical semigroups having depth less than or equal to $q$, that is,
\[\mathscr{C}_q = \{S \in \mathscr{L} \mid \operatorname{depth}(S) \leq q\}.\]
Notice that $\mathscr{C}_0 = \{\mathbb{N}\}$ and that $\mathscr{C}_1 = \left\{ \{0,F+1,F+2, \ldots, \} \mid F \in \mathbb{N} \right\}$. Clearly, both $\mathscr{C}_0$ and $\mathscr{C}_1$ are arithmetic varieties. Let us prove that this is true for every $\mathscr{C}_q,\ q \in \mathbb{N}$.

\begin{lemma}\label{lema39}
If $S$ and $T$ are numerical semigroups and $d \in \mathbb{N} \setminus \{0\}$, then \[\operatorname{depth}\left(\frac{S}d\right) \leq \operatorname{depth}(S).\]
\end{lemma}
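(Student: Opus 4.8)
The plan is to reduce the statement to the arithmetic description of depth recalled just above. Write $q := \operatorname{depth}(S)$. The defining relation $\operatorname{F}(S)+1 = q\operatorname{m}(S) - r$ with $0 \le r < \operatorname{m}(S)$ gives in particular $\operatorname{F}(S)+1 \le q\operatorname{m}(S)$; conversely, since $\operatorname{depth}(M) = \lceil (\operatorname{F}(M)+1)/\operatorname{m}(M) \rceil$ for any numerical semigroup $M$, one has $\operatorname{depth}(M) \le q$ if and only if $\operatorname{F}(M)+1 \le q\operatorname{m}(M)$. So it suffices to prove $\operatorname{F}(S/d)+1 \le q\operatorname{m}(S/d)$. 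The cases $S = \mathbb{N}$ and $S/d = \mathbb{N}$ are immediate (in the first $S/d = \mathbb{N}$ as well and both depths are $0$; in the second $\operatorname{depth}(S/d) = 0 \le \operatorname{depth}(S)$), so I would from now on assume that both $S$ and $S/d$ are proper numerical semigroups, which makes all the Frobenius numbers appearing below positive integers and dispenses with the convention $\operatorname{F}(\mathbb{N}) = -1$.

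The core of the argument consists of two elementary inequalities relating the invariants of $S$ and of $S/d$. First, $d\,\operatorname{F}(S/d) \le \operatorname{F}(S)$: if $f := \operatorname{F}(S/d)$, then $f \notin S/d$, that is $df \notin S$, so $df$ is a positive gap of $S$ and hence $df \le \operatorname{F}(S)$. Second, $\operatorname{m}(S) \le d\,\operatorname{m}(S/d)$: by definition of $S/d$ the integer $d\,\operatorname{m}(S/d)$ lies in $S$, and it is positive, so it is a nonzero element of $S$ and therefore at least $\operatorname{m}(S)$.

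Combining these with $\operatorname{F}(S)+1 \le q\operatorname{m}(S)$ produces the chain
\[
d\,\operatorname{F}(S/d) \;\le\; \operatorname{F}(S) \;\le\; q\operatorname{m}(S) - 1 \;<\; q\operatorname{m}(S) \;\le\; q\,d\,\operatorname{m}(S/d) \;=\; d\cdot\bigl(q\operatorname{m}(S/d)\bigr),
\]
and dividing through by the positive integer $d$ yields $\operatorname{F}(S/d) < q\operatorname{m}(S/d)$, hence $\operatorname{F}(S/d)+1 \le q\operatorname{m}(S/d)$ because both sides are integers. By the reduction above this gives $\operatorname{depth}(S/d) \le q = \operatorname{depth}(S)$, as desired. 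I do not expect a genuine obstacle here: the only points needing a little care are clearing away the $\operatorname{F}(\mathbb{N}) = -1$ boundary case at the outset and noting that dividing the strict inequality by $d$ is legitimate since $d \ge 1$. (The hypothesis that $T$ is a numerical semigroup plays no role in the argument and may be dropped.)
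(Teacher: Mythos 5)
Your proof is correct and rests on exactly the two inequalities that drive the paper's displayed computation, namely $\operatorname{F}\left(\frac{S}{d}\right)\le \operatorname{F}(S)/d$ and $\operatorname{m}\left(\frac{S}{d}\right)\ge \operatorname{m}(S)/d$; the only difference is cosmetic, since you encode $\operatorname{depth}\le q$ as $\operatorname{F}+1\le q\operatorname{m}$ instead of comparing floors of ratios. If anything, yours is the more careful write-up: the paper's prose asserts the unscaled inequality $\operatorname{m}\left(\frac{S}{d}\right)\ge\operatorname{m}(S)$, which is false in general (e.g.\ $S=\{0\}\cup\{10,11,\ldots\}$ and $d=5$ give $\operatorname{m}\left(\frac{S}{d}\right)=2$), although only the scaled version is actually used in its display.
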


\begin{proof}
If $d \in \mathbb{N}$, then $\frac{S}d = \mathbb{N}$ and, thus, $\operatorname{depth}(\mathbb{N}) = 0 \leq \operatorname{depth}(S).$ If $d \not\in S$, then we have that $\operatorname{m}\left(\frac{S}d\right) \geq \operatorname{m}(S)$ and that $\operatorname{F}\left(\frac{S}d\right) \leq \operatorname{F}(S)$. Therefore, 
\[\operatorname{depth}\left(\frac{S}d\right) = \left\lfloor 
\frac{\operatorname{F}\left(\frac{S}d\right)}{\operatorname{m}\left(\frac{S}d\right)}
\right\rfloor + 1 \leq \left\lfloor
\frac{\frac{\operatorname{F}(S)}{d}}{\frac{\operatorname{m}(S)}{d}}
\right\rfloor + 1 = \operatorname{depth}(S)\]
and we are done.
\end{proof}

\begin{theorem}\label{ThDepth}
The set $\mathscr{C}_q$ is an arithmetic variety for every $q \in \mathbb{N}$.
\end{theorem}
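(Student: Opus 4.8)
The plan is to verify the two conditions of Proposition \ref{Prop1} for $\mathscr{A} = \mathscr{C}_q$, after first noting that $\mathscr{C}_q$ is non-empty because $\operatorname{depth}(\mathbb{N}) = 0 \leq q$, so $\mathbb{N} \in \mathscr{C}_q$. Condition (b), stability under quotients, is immediate from Lemma \ref{lema39}: if $S \in \mathscr{C}_q$ and $d \in \mathbb{N} \setminus \{0\}$, then $\operatorname{depth}\!\left(\frac{S}{d}\right) \leq \operatorname{depth}(S) \leq q$, so $\frac{S}{d} \in \mathscr{C}_q$.

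The real content is condition (a): if $\{S,T\} \subseteq \mathscr{C}_q$, then $S \cap T \in \mathscr{C}_q$. First I would dispose of the degenerate case: if $S = \mathbb{N}$ then $S \cap T = T \in \mathscr{C}_q$ (and symmetrically for $T = \mathbb{N}$), so we may assume $S, T \subsetneq \mathbb{N}$, whence $\operatorname{F}(S), \operatorname{F}(T) \geq 0$. Then I would record the two standard facts about intersections of numerical semigroups: $\operatorname{F}(S \cap T) = \max\{\operatorname{F}(S), \operatorname{F}(T)\}$ (as already used in the proof of Proposition \ref{Prop30}, since $\mathbb{Z} \setminus (S \cap T) = (\mathbb{Z}\setminus S) \cup (\mathbb{Z}\setminus T)$), and $\operatorname{m}(S \cap T) \geq \max\{\operatorname{m}(S), \operatorname{m}(T)\}$, which holds because $(S \cap T)\setminus\{0\}$ is contained in both $S \setminus \{0\}$ and $T \setminus \{0\}$.

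With these in hand the computation is short. Assume without loss of generality that $\operatorname{F}(S) \geq \operatorname{F}(T)$, so that $\operatorname{F}(S \cap T) = \operatorname{F}(S)$. Using the formula $\operatorname{depth}(U) = \left\lfloor \operatorname{F}(U)/\operatorname{m}(U) \right\rfloor + 1$ and the inequality $\operatorname{m}(S \cap T) \geq \operatorname{m}(S) > 0$, I get
\[
\operatorname{depth}(S \cap T) = \left\lfloor \frac{\operatorname{F}(S)}{\operatorname{m}(S \cap T)} \right\rfloor + 1 \leq \left\lfloor \frac{\operatorname{F}(S)}{\operatorname{m}(S)} \right\rfloor + 1 = \operatorname{depth}(S) \leq q,
\]
so $S \cap T \in \mathscr{C}_q$. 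By Proposition \ref{Prop1}, $\mathscr{C}_q$ is an arithmetic variety.

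There is no serious obstacle here; the only point requiring a moment's care is the direction of the multiplicity inequality — one must notice that $\operatorname{m}(S \cap T) \geq \operatorname{m}(S)$ (not $\leq$), which is exactly what makes the floor estimate run the right way — together with keeping the Frobenius number non-negative so that dividing by a larger denominator genuinely decreases the quotient.
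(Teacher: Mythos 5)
Your proof is correct and follows essentially the same route as the paper: verify the two conditions of Proposition \ref{Prop1}, handle quotients via Lemma \ref{lema39}, and handle intersections via $\operatorname{F}(S \cap T) = \max\{\operatorname{F}(S),\operatorname{F}(T)\}$ together with $\operatorname{m}(S \cap T) \geq \max\{\operatorname{m}(S),\operatorname{m}(T)\}$ and the floor estimate. Your explicit treatment of the case $S = \mathbb{N}$ (to keep the Frobenius number non-negative) is a small extra precaution the paper omits, but it does not change the argument.
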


\begin{proof}
Since $\operatorname{depth}(\mathbb{N}) = 0$, we have that $\mathbb{N} \in \mathscr{C}_q$ for every $q \in \mathbb{N}$; in particular, $\mathscr{C}_q \neq \varnothing$ for every $q \in \mathbb{N}$.

Let $q \in \mathbb{N}$. On the one had, if $\{S,T\} \subseteq \mathscr{C}_q$, then we may suppose that $\operatorname{F}(S) \leq \operatorname{F}(T)$. So,  $\operatorname{F}(S \cap T) = \max(\operatorname{F}(S),\operatorname{F}(T)) = \operatorname{F}(T)$ and, since $\operatorname{m}(S \cap T) \geq \max(\operatorname{m}(S),\operatorname{m}(T))),$ by Lemma \ref{lema39}, we have that 
\[
\operatorname{depth}(S \cap T) = \left\lfloor
\frac{\operatorname{F}(S \cap T)}{\operatorname{m}(S \cap T)} \right\rfloor + 1 = \left\lfloor
\frac{\operatorname{F}(T)}{\operatorname{m}(S \cap T)} \right\rfloor + 1 \leq \left\lfloor \frac{\operatorname{F}(T)}{\operatorname{m}(T)} \right\rfloor + 1 = \operatorname{depth}(T) \leq q
\] and, consequently, that $S \cap T \in \mathscr{C}_q$. On the other hand, if $S \in \mathscr{C}_q$, by Lemma \ref{lema39}, we have that $\operatorname{depth}\left(\frac{S}d\right) \leq \operatorname{depth}(S) \leq q$, that is, $\frac{S}d \in \mathscr{C}_q$. Now, by Proposition \ref{Prop1}, we conclude that $\mathscr{C}_q$ is an arithmetic variety.
\end{proof}

As an immediate consequence of Theorem \ref{Th22} and Proposition \ref{Prop30}, we have that $\mathcal{G}_{{(\mathscr{C}_q)}_F}$ is a finite rooted tree with root $\mathbb{N}$ such that the set of children of $S \in {(\mathscr{C}_q)}_F$ is equal to 
\[\{T \in \mathcal{D}_2(S) \mid \operatorname{F}(T) \leq F\ \text{and}\ \operatorname{depth}(T) \leq q\}.\]
Therefore, we can formulate an algorithm for the computation of ${(\mathscr{C}_q)}_F$.

\begin{algorithm}\label{Alg43}
Computation of ${(\mathscr{C}_q)}_F$.

\textsc{Input:} Two positive integers $F$ and $q$.

\textsc{Output:} The arithmetic variety ${(\mathscr{C}_q)}_F$.

\begin{enumerate}
    \item Set $A = B = \{\mathbb{N}\}$.
    \item While $B \neq \varnothing$ do
    \begin{enumerate}[(2.1)]
        \item For $S \in B$ do
        \begin{enumerate}[(2.1.1)]
            \item Compute $B_S = \{T \in \mathcal{D}_2(S) \mid \operatorname{F}(T) \leq F\} \setminus \{S\}$.
            \item Compute $C_S = \{T \in \mathcal{B}(S) \mid \operatorname{depth}(T) \leq q\}$.
        \end{enumerate}
        \item Set $B = \cup_{S \in B} C_S$.
        \item Set $A = A \cup B$.
    \end{enumerate}
    \item Return $A$.
\end{enumerate}   
\end{algorithm}

\begin{example}
We can easily check that $(\mathscr{C}_2)_5$ is equal to 
\[
\begin{array}{c}
\{\mathbb{N}, \langle 2, 3 \rangle, \langle 2, 5 \rangle, \langle 3,4, 5 \rangle, \langle 3, 4 \rangle, \langle 3, 5, 7 \rangle, \langle 3, 7, 8 \rangle,\\ \langle 4,5,6, 7 \rangle, \langle 4, 6, 7, 9 \rangle, \langle 5,6,7,8 9 \rangle, \langle 6, 7, 8, 9, 10, 11 \rangle \}
\end{array}
\]
using the following GAP function based in Algorithm \ref{Alg43}. 

\begin{verbatim}
NumericalSemigroupsWithFrobeniusNumberAndDepth:=function(F,q)
    local A,B,C,depth,S,BS,rat,CS;
    A:=[NumericalSemigroup(1)];
    B:=A;
    while not(IsEmpty(B)) do
      C:=[];
      for S in B do 
        BS := Algorithm37(S,F);
        BS := Set(BS,i->UpperMSetToNumericalSemigroup(S,i[1],i[2]));
        BS := Difference(BS,[NumericalSemigroup(1)]);
        rat:(FrobeniusNumber(i)+1)/MultiplicityOfNumericalSemigroup(i)
        CS := Filtered(BS,i->CeilingOfRational(rat) <= q);
        C:=Union(C,CS);
      od;
      B:=C;
      A:=Union(A,B);
    od;
    return(A);
  end;  
\end{verbatim}

\end{example}

\subsection*{Declarations}\mbox{}\par 
\noindent The authors are partially supported by Proyecto de Excelencia de la Junta de Andaluc\'{\i}a (ProyExcel\_00868) and by Proyecto de investigaci\'on del Plan Propio - UCA 2022-2023 (PR2022-011). The second author is partially supported by grant PID2022-138906NB-C21 funded by MCIN/AEI/10.13039/501100011033 and by ERDF ``A way of making Europe'', and by research group FQM024 funded by Junta de Extremadura (Spain)/FEDER funds.

\medskip
\noindent The authors have no relevant financial or non-financial interests to disclose.

\medskip
\noindent
All authors have contributed equally in the development of this work.

\end{document}